\newtheorem{theorem}{Theorem}[section]
\newtheorem{lemma}[theorem]{Lemma}
\newtheorem{corollary}[theorem]{Corollary}
\newtheorem{proposition}[theorem]{Proposition}
\newtheorem{definition}[theorem]{Definition}
\numberwithin{equation}{section}
\def\blfootnote{\xdef\@thefnmark{}\@footnotetext}
\def\m{\mathbb}					
\def\lam{\lambda}    	\def\th{\theta}
\def\a{\alpha}	\def\b{\beta}	\def\ls{\lesssim}
\def\p{\partial}  
\def\wh{\widehat}	\def\la{\langle}	\def\ra{\rangle}
\def\ls{\lesssim}	\def\gs{\gtrsim}
\def\be{\begin{equation}}     \def\ee{\end{equation}}
\def\bp{\begin{pmatrix}}	\def\ep{\end{pmatrix}} 
\def\g{\gamma}		\def\F{\mathscr{F}}
\begin{document}
\title{Effect of lower order terms on the well-posedness of Majda-Biello systems 
}
\author[]{Xin Yang, Shenghao Li and Bing-Yu Zhang}
\date{}
\maketitle

\begin{abstract}

This paper investigates  a noteworthy phenomenon within the framework of Majda-Biello systems, wherein the inclusion of lower-order terms can enhance the well-posedness of the system. Specifically, we investigate the initial value problem (IVP) of the following system:
 \[
	\left\{
	\begin{array}{l}
		u_{t} + u_{xxx} = - v v_x,\\
		v_{t} + \alpha v_{xxx} + \beta v_x = - (uv)_{x},\\
		(u,v)|_{t=0} = (u_0,v_0) \in H^{s}(\mathbb{R}) \times H^{s}(\mathbb{R}),
	\end{array}
	\right. \quad x \in \mathbb{R}, \, t \in \mathbb{R},
\]
where $\alpha \in \mathbb{R}\setminus \{0\}$ and $\beta \in \mathbb{R}$. Let $s^{*}(\alpha, \beta)$  be the smallest value for 
which the IVP  is locally analytically well-posed in $H^{s}(\mathbb{R})\times H^{s}(\mathbb{R}) $ when $s > s^{}(\alpha, \beta)$. 
Two interesting facts have already been known in literature: $s^{*}(\alpha, 0) = 0$ for $\alpha \in (0,4)\setminus\{1\}$ and $s^*(4,0) = \frac34$.
Our key findings include the following:
\begin{itemize}
\item For $s^{*}(4,\beta)$, a significant reduction is observed,  reaching  $\frac12$ for $\beta > 0$  and $\frac14$ for $\beta < 0$.
\item Conversely, when $\alpha \neq 4$, we demonstrate that the value of $\beta$ exerts no influence on $s^*(\alpha, \beta)$.
\end{itemize}
These results shed light on the intriguing behavior of Majda-Biello systems when lower-order terms are introduced and provide valuable insights into the role of $\alpha $ and $\beta$ in the well-posedness of the system.

\end{abstract}


\blfootnote{\hspace{-0.25in} 2010 Mathematics Subject Classification. 35Q53; 35G55; 35L56; 35D30.}

\blfootnote{\hspace{-0.25in} Key words and phrases. KdV-KdV systems;   Majda-Biello systems; Local well-posedness; Fourier restriction spaces; Bilinear estimates.}

\begin{center}
\tableofcontents
\end{center}

\section{Introduction}  

\quad

The initial value problem (IVP) associated with the Korteweg-de Vries (KdV) equation,
\begin{equation} \label{x-1}
u_t + uu_x + \alpha u_{xxx} + \beta u_{x} = 0, \quad u(x,0) = \phi (x),
\end{equation}
has undergone extensive study for its well-posedness over the past six decades, dating back to the late 1960s (cf. \cite{Sjo67, Sjo70, Tem69, BS75, Kat83,KPV91JAMS, KPV93Duke,KPV93CPAM,Bou93b, KPV96, KT06, CCT03, CKSTT03, KV19,Mol11, Mol12} and the references therein).  Investigations have encompassed both the entire real line $\mathbb{R}$ and the torus $\mathbb{T}:=\m{R}/\m{Z}$, with a primary focus on establishing  the well-posedness within the classical Sobolev spaces $H^s (\mathbb{R})$ or $H^s (\mathbb{T})$.

It is now well-established that the IVP (\ref{x-1}) is analytically well-posed in the space $H^s (\mathbb{R})$ if and only if $s \geq -\frac{3}{4}$. Moreover, it is $C^0$ well-posed in $H^s (\mathbb{R})$ if and only if $s \geq -1$. The significance of the  index $-\frac{3}{4}$ is underscored by the fact that local well-posedness of the IVP (\ref{x-1}) in $H^s (\mathbb{R})$ can be established using the contraction mapping principle if and only if $s \geq -\frac{3}{4}$. 
This observation carries profound implications as we will illustrate next. When considering the local well-posedness of the IVP (\ref{x-1}) in $H^s (\mathbb{R})$ for $s \geq -\frac{3}{4}$, the associated linear IVP
\begin{equation} \label{x-2}
u_t + \alpha u_{xxx} + \beta u_{x} = 0, \quad u(x,0) = \phi (x),
\end{equation}
plays a pivotal role, where the nonlinear term $uu_x$ is treated as a small perturbation. However, for $s < -\frac{3}{4}$, the nonlinear term $uu_x$ can no longer be regarded as a small perturbation. In such cases, more sophisticated nonlinear analysis tools must be employed to establish the well-posedness of the IVP (\ref{x-1}) in $H^s (\mathbb{R})$. For this reason, $-\frac34 $ can be regarded as the  critical index of the IVP (\ref{x-1}) concerning the nonlinear effect. 
 
This paper investigates a modified version of the Majda-Biello systems described as follows:
\be\label{m-MB}
	\left\{
	\begin{array}{rcl}
		u_{t} + u_{xxx} & = & - v v_x,\\
		v_{t} + \a v_{xxx} + \b v_x & = & - (uv)_{x},\\
		(u,v)|_{t=0} & = & (u_0,v_0) \in \mathscr{H}^{s}(\m{R}),
	\end{array}
	\right. \quad x \in \m{R}, \, t \in \m{R}.
\ee
where $x$ and $t$ are real numbers, $\alpha \in \mathbb{R} \setminus \{0\}$ and $\beta \in \mathbb{R}$. The function space $\mathscr{H}^{s}(\mathbb{R})$ is defined as the Cartesian product $H^{s}(\mathbb{R}) \times H^{s}(\mathbb{R})$, where $H^{s}(\mathbb{R})$ denotes the standard Sobolev space. Let $s^{}(\alpha, \beta)$, referred to as the critical index, be the smallest value for which the IVP  (\ref{m-MB}) is locally analytically well-posed in $\mathscr{H}^{s}(\mathbb{R})$ when $s > s^{}(\alpha, \beta)$.  As in the case of the IVP (\ref{x-1}), this condition is equivalent to stating that the local well-posedness of the IVP (\ref{m-MB}) can be established using the contraction mapping principle. Oh in \cite{Oh09} first studied (\ref{m-MB}) with $\beta = 0$ and proved that $s^{*}(\alpha, 0) = 0$ when $\alpha \in (0, 4) \setminus \{1\}$. Moreover, Oh asserted that when $\alpha < 0$, $\alpha > 4$, or $\alpha = 1$, the problem (\ref{m-MB}) is locally analytically well-posed in $\mathscr{H}^s(\mathbb{R})$ for $s > -\frac{3}{4}$.
The special case when $\a = 4$ was later handled by Yang and Zhang in \cite{YZ22a}.


Typically, for a fixed $\alpha$, it is anticipated that the parameter $\beta$ does not significantly affect the value of $s^{*}$ since $\beta v_{x}$ is a lower-order term when compared to $\alpha v_{xxx}$.  This is the case for the  IVP  of the KdV equation,
\begin{equation}\label{KdV}
u_t + uu_x + \alpha u_{xxx} + \beta u_{x} = 0,
\qquad u(x,0) = u_0 (x) \in H^{s}(\mathbb{R}).
\end{equation}
Its critical index $-\frac34$ is irrespective of the values of $\alpha$ and $\beta$.


In contrast to this well-established fact, this paper will reveal an intriguing phenomenon: for a special $\alpha$, the inclusion of the lower-order term $\beta v_{x}$ in (\ref{m-MB}) will actually decrease the critical index $s^{*}$ compared to the case when $\beta=0$. On the one hand, when $\alpha = 4$ and $\beta = 0$, it has been previously demonstrated in \cite{YZ22a} that (\ref{m-MB}) exhibits analytical well-posedness in $\mathscr{H}^{s}(\mathbb{R})$ for all $s \geq \frac{3}{4}$. Conversely, employing a similar approach as in \cite{Bou93a} and \cite{Oh09}, it becomes evident that (\ref{m-MB}) lacks local $C^2$ well-posedness in $\mathscr{H}^{s}(\mathbb{R})$ for any $s < \frac{3}{4}$.
\footnote{\noindent For completeness, we also include a proof of this statement in the appendix.}
As a result, we have:
\be\label{old_ind}
	s^{*}(4,0) = \frac34. 
\ee
 On the other hand, this paper will explore the scenario in which $\alpha = 4$ and $\beta\neq 0$. It will demonstrate that the critical index  $s^{*}$ can be diminished, as shown in Theorem \ref{Thm, main_R} below, which serves as the core contribution of this paper. For the sake of brevity, we'll use the abbreviation ``LWP" for ``locally well-posed".

\begin{theorem}\label{Thm, main_R}
	Consider the modified Majda-Biello system (\ref{m-MB}) with $\alpha = 4$.
	\begin{itemize}
		\item[(a)] If $\beta > 0$, then (\ref{m-MB}) is analytically LWP in $\mathscr{H}^s(\mathbb{R})$ for all $s \geq \frac{1}{2}$, but it lacks $C^2$ LWP  in $\mathscr{H}^s(\mathbb{R})$ for all $s < \frac{1}{2}$.
		
		\item[(b)] If $\beta < 0$, then (\ref{m-MB}) is analytically LWP  in $\mathscr{H}^s(\mathbb{R})$ for all $s > \frac{1}{4}$, but it lacks $C^3$ LWP  in $\mathscr{H}^s(\mathbb{R})$ for all $s < \frac{1}{4}$.	
	\end{itemize}
	Therefore, the critical	 index  $s^{*}(4, \beta)$ is given by:
	\begin{equation}\label{}
		s^{*}(4, \beta) = 
		\begin{cases}
			\frac{1}{2} & \text{if } \beta > 0, \\
			\frac{1}{4} & \text{if } \beta < 0.
		\end{cases}
	\end{equation}
\end{theorem}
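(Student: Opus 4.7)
The plan is to follow the Bourgain Fourier restriction scheme used in \cite{Oh09, YZ22a} for the unperturbed case, but with the added phase twist coming from the lower-order $\beta v_x$ term reshaping the resonance set.

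\emph{Setup.} Introduce Bourgain spaces $X^{s,b}_1$ and $X^{s,b}_2$ adapted to the phases $\phi_1(\xi)=\xi^3$ and $\phi_2(\xi)=4\xi^3-\beta\xi$ respectively, together with their time-restricted versions on a slab $[-T,T]$. The required linear, energy, and embedding estimates in these spaces are standard and identical to the $\beta=0$ case modulo changing the group, so the whole problem reduces to proving the two bilinear estimates
\[
\|vv_x\|_{X^{s,b-1}_1} \lesssim \|v\|_{X^{s,b}_2}^{2},
\qquad
\|(uv)_x\|_{X^{s,b-1}_2} \lesssim \|u\|_{X^{s,b}_1}\,\|v\|_{X^{s,b}_2},
\]
for some $b>1/2$, together with sharpness counterexamples.

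\emph{Resonance analysis.} A direct algebraic calculation gives the two resonance functions (with $\xi=\xi_1+\xi_2$)
\[
\Phi_1(\xi_1,\xi_2) = (\xi_1+\xi_2)\bigl(\beta - 3(\xi_1-\xi_2)^2\bigr),
\qquad
\Phi_2(\xi_1,\xi_2) = \xi_1\bigl(3(\xi_1+2\xi_2)^2-\beta\bigr),
\]
where in $\Phi_2$ the variable $\xi_1$ is the $u$-frequency. The geometry of $\{\Phi_i=0\}$ depends qualitatively on $\mathrm{sgn}(\beta)$: at $\beta=0$ each $\Phi_i$ carries a \emph{double} zero along the diagonal $\xi_1=\xi_2$, respectively $\xi_1=-2\xi_2$, and this is precisely the obstruction producing the index $s^*(4,0)=3/4$ recalled in \eqref{old_ind}; at $\beta>0$ the double zero splits into two simple zeros along the curves $\xi_1-\xi_2 = \pm\sqrt{\beta/3}$ and $\xi_1+2\xi_2 = \pm\sqrt{\beta/3}$; at $\beta<0$ the interior zero set collapses to the trivial loci $\xi=0$, $\xi_1=0$, and one has a uniform lower bound $|\Phi_i|\gtrsim |\text{out-freq}|\cdot(|\beta|+(\text{freq-diff})^2)$. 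This dichotomy, double/simple/empty, is exactly what drives the three-way split $3/4 \to 1/2 \to 1/4$ of the critical index.

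\emph{Positive bilinear estimates.} For each nonlinearity I dyadically localize the three frequencies and modulations and split the integration domain into a near-resonance tube $\{|\Phi_i|\le 1\}$ and its complement. Off the tube the modulation weight fully absorbs $|\Phi_i|$, reducing the estimate to the same $L^2$ volume count that is already controlled in the classical Kenig--Ponce--Vega bilinear analysis and closes well below each claimed threshold. On the tube: for $\beta>0$ the tube is a neighborhood of a one-dimensional curve in the dominant high-high-to-high regime $\xi_1\approx\xi_2$, of width of order $|\xi|^{-1}$ around a simple zero, and a Cauchy--Schwarz and volume count combined with the Kato smoothing embedding shows that the geometric series over dyadic blocks closes exactly at $s=1/2$, the endpoint requiring a Besov refinement of $X^{s,b}$ to absorb a logarithm; for $\beta<0$ the tube is empty, and the lower bound $|\Phi_i|\gtrsim N\cdot|\beta|$ on the dominant frequency scale $N$, inserted into the refined $L^4$--$L^4$--$L^2$ scheme for the dispersive group $\phi_2$, gains an extra $N^{1/2}$ derivative and drops the threshold to any $s>1/4$.

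\emph{Sharpness.} For the ill-posedness half of case (a), I take $v_0$ to consist of two unit-$H^s$-mass Fourier packets narrowly localized at $\xi=N$ and $\xi=N+\sqrt{\beta/3}$; the second Picard iterate of $u$ then contains a term whose integrand has vanishing $\Phi_1$, and an explicit $H^s$-norm computation yields a blow-up of order $N^{1/2-s}$ as $N\to\infty$, ruling out $C^2$ data-to-solution for any $s<1/2$. For case (b) the second iterate is automatically bounded because the interior resonance set is empty, and one must go up to the third Picard iterate; choosing a three-wave input that cancels an intermediate bilinear obstruction and then triggers a genuine algebraic resonance in the outer composition yields an $N^{1/4-s}$ blow-up and refutes $C^3$ data-to-solution. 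These are direct variants of the Bourgain--Tzvetkov--Oh scheme already referenced in the paper's appendix. The main obstacle throughout is the positive bilinear estimate on the near-resonance tube in case (a) at the endpoint $s=1/2$, where the $\beta$-dependent geometry of the simple zero must be tracked with enough precision to match the $N^{1/2-s}$ counterexample scale.
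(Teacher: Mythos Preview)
Your plan is essentially the paper's: Bourgain spaces adapted to the two phases, bilinear estimates driven by the factored resonance functions you correctly compute, and Picard-iterate counterexamples for sharpness. A few differences in execution are worth flagging.

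For the positive direction with $\beta>0$, the paper does \emph{not} need a Besov refinement at the endpoint $s=\tfrac12$. Instead of a dyadic tube/complement split with Kato smoothing, it runs a direct Cauchy--Schwarz reduction to one-dimensional integrals and applies elementary calculus lemmas (Lemmas~\ref{Lemma, int in tau}--\ref{Lemma, int for cubic}). The endpoint is reached in the critical high--high--high regime via the algebraic trick
\[
\la L_3\ra\,\la \xi_3^3-\beta\xi_3-\tau_3\ra \;\ge\; \la \beta\xi_3\ra,
\]
which exactly cancels the residual $|\xi_3|^{1/2}$; no logarithm appears. For $\beta<0$ the paper likewise avoids any $L^4$--$L^4$--$L^2$ or smoothing machinery: the lower bound $|H|\gtrsim|\xi_1|$ is fed directly into a case split on $\mathrm{MAX}=\max_i\la L_i\ra$, and the same integral lemmas close the estimate for $\tfrac12<b\le s+\tfrac14$, i.e.\ $s>\tfrac14$. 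Your higher-level toolbox would presumably also work, but the paper's route is more hands-on and delivers the $\beta>0$ endpoint without auxiliary function spaces.

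For sharpness in (a), you disprove $C^2$ via the second iterate of $u$ with two $v_0$-packets at $N$ and $N+\sqrt{\beta/3}$; the paper instead uses the second iterate of $v$ with one $u_0$-packet near $2N+\sqrt{\beta/3}$ and one $v_0$-packet near $-N$ (width $N^{-1}$), targeting the resonance $G_0$ of the $(uv)_x$ term. Both hit a simple zero of the resonance and give the same $N^{1/2-s}$ growth, so either is fine. For (b), your description is schematic; the paper's concrete choice is $\phi=0$ and $\widehat\psi$ supported on two intervals near $\pm N$ of width $N^{-1/2}$, so that in $\psi_3$ the inner resonance $G_1$ is of size $N$ while the outer resonance $G_2$ is of size $1$, yielding $\|\psi_3\|_{H^s}\gtrsim N^{1/2-2s}$ and hence failure of $C^3$ for $s<\tfrac14$.
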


Motivated by the insights from Theorem \ref{Thm, main_R}, it becomes intriguing to explore the behavior of the parameter $\alpha$ for values beyond $\mathbb{R} \setminus \{0, 4\}$.

Based on Oh's work in \cite{Oh09}, Yang and Zhang in \cite{YZ22a} continued to explore the problem (\ref{m-MB}) with $\beta \neq 0$. Their investigation extended to the well-posedness problem of a broader class of coupled KdV-KdV systems, of which (\ref{m-MB}) represents a special case.
Let's revisit the general setup (1.2) presented in \cite{YZ22a}: 
\[
	\left\{\begin{array}{rcl}
		u_t + a_{1} u_{xxx} + b_{11} u_x & = & -b_{12} v_x + c_{11} uu_x + c_{12} vv_x + d_{11}u_{x} v + d_{12} uv_{x}, \vspace{0.03in}\\
		v_t + a_{2} v_{xxx} + b_{22} v_x & = & -b_{21} u_x + c_{21} uu_x + c_{22} vv_x + d_{21} u_{x}v + d_{22} uv_{x}, \vspace{0.03in}\\
		\left. (u,v)\right |_{t=0} & = & (u_{0}, v_{0}).
	\end{array}\right.
\]
Comparing this with our current model (\ref{m-MB}), the coefficients have the following specific values:
\[ 
	\begin{array}{c}
		a_1 = 1, \, a_2 = \a, \quad b_{11} = b_{12} = b_{21} = 0, \, b_{22} = \b, \\
		c_{11} = c_{21} = c_{22} = 0, \, c_{12} = -1, \quad d_{11} = d_{12} = 0, \, d_{21} = d_{22} = -1.
	\end{array}
\]
Applying Theorem 1.2 from \cite{YZ22a} to (\ref{m-MB}), we conclude that (\ref{m-MB}) is analytically LWP in $\mathscr{H}^{s}(\mathbb{R})$ in the following situations:
\begin{itemize}
	\item $\a \in (0,4) \setminus \{1\}$, $\b \in \m{R}$ and $s \geq 0$.
	
	\item $\a < 0$, $\a > 4$ or $\a = 1$, $\b \in \m{R}$ and $s > -\frac34$.
\end{itemize}

Now we will further demonstrate in Theorem \ref{Thm, auxi_R} that the above thresholds for $s$ are sharp, regardless of the value of $\beta$. This implies that the presence of the lower-order term $\beta v_x$ in (\ref{m-MB}) does not influence the analytical well-posedness thresholds of $s$ when $\alpha \notin \{0, 4\}$.

\begin{theorem}\label{Thm, auxi_R}
	Let $\a \in \m{R} \setminus \{0, 4\}$ in the modified Majda-Biello system (\ref{m-MB}).
	\begin{itemize}
		\item[(a)] If $\a \in (0,4) \setminus \{1\}$, then for any $\b \in \m{R}$, 
		(\ref{m-MB}) fails to be $C^2$ LWP in $\mathscr{H}^s(\m{R})$ for any $s < 0$.	
		
		\item[(b)] If $ \a < 0$, $\a > 4$ or $\a = 1$, then for any $\b \in \m{R}$, 
		(\ref{m-MB}) fails to be $C^3$ LWP in $\mathscr{H}^s(\m{R})$ for any $s < -\frac34$.	
	\end{itemize}
\end{theorem}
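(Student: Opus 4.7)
The plan for both parts is to invoke the by-now-standard Bourgain/Oh ill-posedness template. Assume by contradiction that the data-to-solution map $(u_0,v_0)\mapsto (u,v)$ from $\mathscr{H}^s(\mathbb{R})$ into $C([0,T];\mathscr{H}^s(\mathbb{R}))$ is $C^k$ at the origin, with $k=2$ in part (a) and $k=3$ in part (b); extract the $k$th Fr\'echet derivative at zero from the Picard iteration of the Duhamel integral equation; and construct a sequence of high-frequency initial data on which the resulting multilinear operator has unbounded $H^s$ norm. The only new input beyond \cite{Oh09} and the KdV-type argument of \cite{Bou93a} is verifying that the lower-order term $\beta v_x$ does not displace the resonance set that drives the divergence.

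For part (a), I take $u_0=0$ and $v_0=\delta\psi$. The quadratic-in-$\psi$ correction to $u$ from the Duhamel formula is
\[
u^{(2)}(t)=-\tfrac{1}{2}\int_0^t S_1(t-t')\,\partial_x\!\bigl[(S_2(t')\psi)^2\bigr]\,dt',
\]
where $S_1(t)$ and $S_2(t)$ denote the linear groups with Fourier symbols $e^{it\xi^3}$ and $e^{it(\alpha\xi^3-\beta\xi)}$. A $C^2$ assumption forces $\|u^{(2)}(t)\|_{H^s}\lesssim\|\psi\|_{H^s}^2$ for small $t$. Passing to Fourier yields an oscillatory integral with resonance function
\[
\Phi(\xi,\eta)=\alpha(\xi-\eta)^3+\alpha\eta^3-\xi^3-\beta\xi=\xi\Bigl[3\alpha\bigl(\eta-\tfrac{\xi}{2}\bigr)^2+\tfrac{\alpha-4}{4}\xi^2\Bigr]-\beta\xi.
\]
For $\alpha\in(0,4)\setminus\{1\}$ and $\beta=0$, Oh \cite{Oh09} concentrated $\hat\psi$ on two thin intervals whose Minkowski sum covers a zero curve of the cubic part of $\Phi$, producing a ratio $\|u^{(2)}(1)\|_{H^s}/\|\psi\|_{H^s}^2$ that blows up as $N\to\infty$ whenever $s<0$. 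I would reuse this construction after noting that on the annulus $|\xi|\sim N$, the zero set of the full $\Phi$ differs from that of its $\beta=0$ counterpart by only $O(|\beta|/N)$ in $\eta$, so that a small translation of Oh's test intervals recovers the resonance and the divergent ratio.

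For part (b), the bilinear operator $u^{(2)}$ above is already controlled in $H^s$ for $s>-3/4$ in the regime under consideration, because the cubic part of $\Phi$ does not vanish on nontrivial high-frequency sets when $\alpha\notin[0,4]$ and factorises as $3\xi\eta(\eta-\xi)$ in the diagonalisable case $\alpha=1$. One must therefore pass to the third iterate of the Picard scheme, which with $u_0=0$, $v_0=\delta\psi$ produces the trilinear-in-$\psi$ expression
\[
v^{(3)}(t)=-\int_0^t S_2(t-t')\,\partial_x\!\bigl[u^{(2)}(t')\,v^{(1)}(t')\bigr]\,dt',
\]
whose Fourier-side phase is a cubic resonance function of three input frequencies summing to $\xi$, perturbed by a $\beta$-linear correction. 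Following Bourgain's classical KdV construction \cite{Bou93a}, I would concentrate $\hat\psi$ on two unit-length intervals near $\pm N$ so that the triple convolution is localised where this cubic resonance vanishes, producing a ratio $\|v^{(3)}(1)\|_{H^s}/\|\psi\|_{H^s}^3$ that diverges for $s<-3/4$.

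The main difficulty I anticipate is the quantitative stability of the resonance geometry under the $\beta v_x$ perturbation. Concretely, I need that on the frequency range $|\xi|\sim N$ used in the construction (with $N\gg|\beta|^{1/2}$), the zero set of the full $\beta$-dependent resonance function is a $C^1$ perturbation, of size $O(|\beta|/N)$ in the transverse direction, of the zero set for $\beta=0$. Once this uniform control is in place, the measure of the near-resonance set, and hence the size of the bilinear or trilinear output, is preserved, and the ill-posedness proofs of \cite{Oh09} and \cite{Bou93a} go through with only cosmetic modifications.
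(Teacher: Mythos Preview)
Your approach for part (a) is essentially what the paper does: it also uses the second Picard iterate and absorbs the $\beta$-perturbation by an explicit translation of one test interval by $\lambda N^{-1}$ with $\lambda=\beta/\sqrt{3\alpha(4-\alpha)}$, confirming your $O(|\beta|/N)$ heuristic. (The paper works with $\psi_2$, the second $\delta$-derivative of $v$, taking both $\phi$ and $\psi$ nonzero, rather than your $u^{(2)}$ with $u_0=0$; this is an inessential difference.)

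For part (b), however, your choice of \emph{unit-length} intervals near $\pm N$ will not produce the sharp threshold $-\tfrac34$. The trilinear resonance entering $v^{(3)}$ factors, for any $\alpha$ and \emph{independently of $\beta$}, as
\[
G_2(\xi_1,\xi_2,\xi-\xi_1-\xi_2,-\xi)=-3\alpha\,(\xi_1+\xi_2)(\xi-\xi_2)(\xi-\xi_1).
\]
With $\hat\psi$ supported on unit intervals about $\pm N$ and output $\xi\sim\pm N$, the small factors $\xi_1+\xi_2$ and $\xi-\xi_2$ can only be brought down to order $1$, so $|G_2|\sim N$ rather than $\sim 1$; the oscillatory factor $(e^{iG_2t}-1)/G_2$ then contributes $O(N^{-1})$ instead of $\sim t$, and the resulting bound on $\|v^{(3)}\|_{H^s}$ cannot diverge in the range $-1<s<-\tfrac34$. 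The paper instead takes intervals of width $\gamma_2=N^{-1/2}$ (exactly the data from its $\alpha=4$, $\beta<0$ argument), which forces $|G_2|\sim \gamma_2^2 N\sim 1$ and restores the $\sim t$ factor. A second point your sketch obscures: $\beta$ does not enter $G_2$ at all; it enters only through the intermediate bilinear resonance $G_1$ sitting in the denominator of the $I_1$ integral. The decisive observation for $\alpha>4$, $\alpha<0$, or $\alpha=1$ is that $|G_1(\xi_2,\xi-\xi_1-\xi_2,\xi_1-\xi)|\sim N^3$ uniformly in $\beta$ on the relevant support, so that $|\xi-\xi_1|/|G_1|\sim N^{-2}$; it is this gain of $N^{-2}$ over the $\alpha=4$, $\beta<0$ case (where $|G_1|\sim N$) that pushes the threshold from $\tfrac14$ down to $-\tfrac34$.
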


Having combined all the previously established results, we can present a relatively comprehensive overview of the well-posedness outcomes for (\ref{m-MB}).

\begin{corollary}\label{Cor, summary}
Consider $\alpha \in \mathbb{R} \setminus \{0\}$ in the modified Majda-Biello system (\ref{m-MB}). The regularity threshold $s^{*}$ for (\ref{m-MB}), concerning analytical well-posedness, is characterized as follows:

\[  
	s^{*}(\a, \b) = \left\{
	\begin{array}{cll}
		3/4 & \text{if} & \a = 4, \, \b = 0, \\
		1/2 & \text{if} & \a = 4, \, \b > 0, \\
		1/4 & \text{if} & \a = 4, \, \b < 0, \\
		0 & \text{if} & \a \in (0,4) \setminus \{1\} \text{ and } \b \in \m{R}, \\
		-3/4 & \text{if} & \a < 0, \a > 4 \text{ or } \a = 1, \text{ and } \b \in \m{R}.			
	\end{array}
	\right.
\]
\end{corollary}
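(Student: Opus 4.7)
The plan is purely organizational: each of the five values of $s^{*}(\alpha,\beta)$ in the table is obtained by pairing a positive statement (analytic LWP above the claimed threshold) with a matching negative statement (failure of $C^{2}$ or $C^{3}$ LWP below it). Since analytic well-posedness implies $C^{k}$ well-posedness for every $k$, any $C^{2}$ or $C^{3}$ obstruction at $s < s^{*}$ automatically rules out analytic LWP there, so each such pairing pins down $s^{*}(\alpha,\beta)$ exactly.

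I would first handle the three $\alpha = 4$ entries. For $\beta = 0$, the upper bound $s \geq 3/4$ comes from \cite{YZ22a}, while the complementary $C^{2}$ ill-posedness for $s < 3/4$ is the statement recorded in (\ref{old_ind}), whose proof is deferred to the appendix. For $\beta \neq 0$, both directions are packaged inside Theorem \ref{Thm, main_R}: part (a) gives the threshold $1/2$ when $\beta > 0$, and part (b) gives the threshold $1/4$ when $\beta < 0$. For the two remaining rows I would invoke the general result of \cite{YZ22a}: specializing Theorem 1.2 of \cite{YZ22a} to the coefficient assignment already displayed above yields analytic LWP in $\mathscr{H}^{s}(\mathbb{R})$ for $s \geq 0$ whenever $\alpha \in (0,4) \setminus \{1\}$, and for $s > -3/4$ whenever $\alpha < 0$, $\alpha > 4$, or $\alpha = 1$, in both cases for any $\beta \in \mathbb{R}$. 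The matching sharpness is then supplied respectively by Theorem \ref{Thm, auxi_R}(a) and (b).

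I do not anticipate a substantive obstacle here, since every ingredient is already in place. The only care required is to verify that each coefficient specialization genuinely satisfies the hypotheses of Theorem 1.2 of \cite{YZ22a} and to note that a $C^{2}$ or $C^{3}$ ill-posedness statement is a fortiori strong enough to rule out analytic LWP. The whole proof is therefore a short bookkeeping exercise built on Theorems \ref{Thm, main_R} and \ref{Thm, auxi_R} together with the prior work \cite{YZ22a}.
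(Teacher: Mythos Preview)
Your proposal is correct and matches the paper's approach: the corollary is presented there simply as a consolidation of the preceding results, with no separate argument beyond the pairing of the LWP statements from \cite{YZ22a} and Theorem~\ref{Thm, main_R} with the sharpness statements in (\ref{old_ind}) and Theorems~\ref{Thm, main_R}--\ref{Thm, auxi_R}. Your observation that $C^{2}$/$C^{3}$ ill-posedness a fortiori rules out analytic LWP is exactly the (implicit) logical glue the paper relies on.
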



Up until Corollary \ref{Cor, summary}, we have introduced linear terms exclusively to the $v$-equation in (\ref{m-MB}). However, following a similar approach, we can include linear terms in both the $u$-equation and the $v$-equation of (\ref{m-MB}). This leads us to consider the following modified Majda-Biello system:
\be\label{gm-MB}
\left\{
\begin{array}{rcl}
	u_{t} + u_{xxx} + \b_1 u_{x} & = & - v v_x,\\
	v_{t} + \a v_{xxx} + \b_2 v_x & = & - (uv)_{x},\\
	(u,v)|_{t=0} & = & (u_0,v_0) \in \mathscr{H}^{s}(\m{R}),
\end{array}
\right. \quad x \in \m{R}, \, t \in \m{R},
\ee
where $\a \in \m{R}\setminus\{0\}$, $\b_1, \b_2\in\m{R}$. 

Using a similar argument as the proofs of Theorem \ref{Thm, main_R} and Theorem \ref{Thm, auxi_R}, one can find that the difference $\beta_2 - \beta_1$ plays an analogous role in (\ref{gm-MB}) to that of $\beta$ in (\ref{m-MB}). We denote $s^{*}_1(\alpha, \beta_1, \beta_2)$ as the regularity threshold for analytical LWP for the systems (\ref{gm-MB}). Consequently, we arrive at the following result:

\begin{corollary}\label{Cor, wp for gm-MB}
Let $\a \in \m{R} \setminus \{0\}$ and $\b_1, \b_2 \in \m{R} $ in (\ref{gm-MB}).  The regularity threshold $s^{*}$ for (\ref{gm-MB}), in terms of analytical well-posedness, is determined as follows:
\[  
s^{*}(\a, \b_1, \b_2) = \left\{
\begin{array}{cll}
	3/4 & \text{if} & \a = 4, \, \b_2 = \b_1, \\
	1/2 & \text{if} & \a = 4, \, \b_2 > \b_1, \\
	1/4 & \text{if} & \a = 4, \, \b_2 < \b_1, \\
	0 & \text{if} & \a \in (0,4) \setminus \{1\} \text{ and } \b_1, \b_2 \in \m{R}, \\
	-3/4 & \text{if} & \a < 0, \a > 4 \text{ or } \a = 1, \text{ and } \b_1, \b_2 \in \m{R}.			
\end{array}
\right.
\]
\end{corollary}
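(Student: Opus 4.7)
The plan is to reduce Corollary \ref{Cor, wp for gm-MB} to the previously established Theorem \ref{Thm, main_R}, Theorem \ref{Thm, auxi_R}, and Corollary \ref{Cor, summary} via a Galilean-type change of variables. Specifically, I would introduce
\[
\tilde{u}(x,t) := u(x + \b_1 t, t), \qquad \tilde{v}(x,t) := v(x + \b_1 t, t),
\]
which amounts to viewing the solution in the frame moving with speed $-\b_1$. The chain rule gives $\tilde{u}_t = u_t + \b_1 u_x$ together with $\tilde{u}_x = u_x$ and $\tilde{u}_{xxx} = u_{xxx}$, so substituting into (\ref{gm-MB}) the $\b_1 u_x$ term cancels from the first equation while the $\b_2 v_x$ term in the second equation becomes $(\b_2 - \b_1)\tilde{v}_x$. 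Hence $(\tilde{u}, \tilde{v})$ satisfies
\[
\left\{
\begin{array}{rcl}
\tilde{u}_t + \tilde{u}_{xxx} &=& -\tilde{v}\tilde{v}_x,\\
\tilde{v}_t + \a \tilde{v}_{xxx} + (\b_2 - \b_1)\tilde{v}_x &=& -(\tilde{u}\tilde{v})_x,\\
(\tilde{u}, \tilde{v})|_{t=0} &=& (u_0, v_0),
\end{array}
\right.
\]
which is exactly the system (\ref{m-MB}) with parameters $(\a, \b_2 - \b_1)$.

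Next, I would observe that for each fixed $t$ the map $(u,v) \mapsto (\tilde{u}, \tilde{v})$ is a spatial translation, hence an isometry of $\mathscr{H}^s(\m{R})$ depending smoothly on its arguments. It therefore transports analytic well-posedness, $C^k$ well-posedness, and the corresponding ill-posedness statements between the two systems without any loss. Applying Corollary \ref{Cor, summary} to the transformed system with $\b_2 - \b_1$ in place of $\b$ then reads off the five cases of Corollary \ref{Cor, wp for gm-MB}: when $\a = 4$ the sign and vanishing of $\b_2 - \b_1$ produce the thresholds $3/4$, $1/2$, $1/4$ exactly as dictated by the sign of $\b$ in Theorem \ref{Thm, main_R}, while for $\a \neq 4$ the quantity $\b_2 - \b_1$ plays no role and one recovers $s^* = 0$ or $s^* = -3/4$ according to $\a$, by Theorem \ref{Thm, auxi_R} and the positive results already recorded.

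There is essentially no obstacle here. The only point I would explicitly flag is that the sharp ill-posedness halves of Theorem \ref{Thm, main_R} and Theorem \ref{Thm, auxi_R} transport along with the positive halves, since the counterexamples used in those proofs depend only on the dispersion relations of the linearized equations, and after the Galilean substitution those relations reduce precisely to the ones of (\ref{m-MB}). Thus the entire statement is obtained by combining the single observation that the symmetry $x \mapsto x + \b_1 t$ shifts $(\a,\b_1,\b_2)$ to $(\a, \b_2 - \b_1)$ with the translation invariance of $H^s(\m{R})$.
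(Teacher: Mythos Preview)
Your proposal is correct. The Galilean substitution $\tilde{u}(x,t)=u(x+\b_1 t,t)$, $\tilde{v}(x,t)=v(x+\b_1 t,t)$ does exactly what you claim: it converts (\ref{gm-MB}) into (\ref{m-MB}) with parameter $\b=\b_2-\b_1$, and since for each fixed $t$ it is a spatial translation (an isometry on $H^s(\m{R})$, strongly continuous in $t$), it carries analytic local well-posedness and $C^k$ well-posedness in both directions. Invoking Corollary \ref{Cor, summary} then yields all five cases.

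This is a genuinely different---and cleaner---route from what the paper indicates. The paper does not give a detailed proof of Corollary \ref{Cor, wp for gm-MB}; it simply remarks that ``using a similar argument as the proofs of Theorem \ref{Thm, main_R} and Theorem \ref{Thm, auxi_R}, one can find that the difference $\b_2-\b_1$ plays an analogous role in (\ref{gm-MB}) to that of $\b$ in (\ref{m-MB}).'' That is, the paper's intended proof is to rerun the bilinear estimates in the Fourier restriction spaces $X^{1,\b_1}_{s,b}$ and $X^{\a,\b_2}_{s,b}$ and the ill-posedness constructions, checking along the way that the resonance functions depend only on $\b_2-\b_1$. Your argument bypasses all of that by a single change of coordinates. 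What the paper's approach buys is that it makes explicit, at the level of the resonance analysis, \emph{why} only the difference matters; what your approach buys is brevity and the avoidance of any repeated computation.
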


As we mentioned earlier, Majda-Biello systems are a specialized class within more general coupled KdV-KdV systems. In addition to Majda-Biello systems, two other widely studied systems are the Hirota-Satsuma systems (see, for instance, \cite{HS81, AC08, Fen94}) and the Gear-Grimshaw systems (see, for example, \cite{GG84, AC08, ACW96, BPST92, LP04, ST00}). It would be a natural future endeavor to investigate similar phenomena for these two types of coupled KdV-KdV systems.

Now, we wish to highlight some key ideas in the proofs of Theorem \ref{Thm, main_R} and \ref{Thm, auxi_R}, particularly explaining why the case $\alpha = 4$ is of special significance. Originating from seminal works such as \cite{Bou93a, KPV96}, where Fourier restriction spaces were introduced, the method of bilinear (multilinear) estimates on these spaces has become a standard approach for studying well-posedness in dispersive equations and systems. As explicitly pointed out by Tao in \cite{Tao01}, a crucial element in bilinear estimates is the associated resonance function. For example, in the case of our target problem (\ref{m-MB}), one of the bilinear estimates (see (\ref{bilin-R2})) takes the form:

\[
	\| (uv)_{x} \|_{X^{\a, \b}_{s, b-1}( \m{R}^2 )} \leq C \| u \|_{X^{1}_{s, b}( \m{R}^2 )} \| v \|_{X^{\a, \b}_{s, b}( \m{R}^2 )},  \quad \forall\, u, v \in \mathscr{S}(\m{R}^{2}),
\]
whose associated resonance function is 
\[
	G_0(\eta_1, \eta_2, \eta_3) \triangleq \eta_1^3 + (\a \eta_2^3 - \b \eta_2) + (\a \eta_3^3 - \b \eta_3), \quad \forall\, \sum_{i=1}^{3} \eta_i = 0.
\]
Denote $\langle G_0 \rangle := 1 + |G_0|$. Then in a heuristic sense, the larger $\langle G_0 \rangle$ is, the smaller the threshold $s^*$ will be. Therefore, we need to investigate how small $\langle G_0 \rangle$ can become and the extent of the region where $\langle G_0 \rangle$ is small.
By replacing $\eta_3$ with $-(\eta_1 + \eta_2)$,  we derive the expression for $G_0$ as follows: 
\be\label{G0intro}
	G_0(\eta_1, \eta_2, \eta_3) = -3\a \eta_1^3 f(\eta_2 / \eta_1) + \b\eta_1,
\ee
where 
\[ 
	f(x) = x^2 + x + \frac{\a - 1}{3\a}.
\] 
For the special case when $\alpha = 4$, the function $f(x)$ possesses a repeated root at $-\frac{1}{2}$, indicating a significant resonance effect. In this situation, $G_0$ can be expressed as:

\[G_0(\eta_1, \eta_2, \eta_3) = -3\eta_1 \big[ (2\eta_2 + \eta_1)^2 - \b/3 \big].\]
\begin{itemize}
	\item If $\beta = 0$, then $G_0(\eta_1, \eta_2, \eta_3) = -3\eta_1 (2\eta_2 + \eta_1)^2$. Consequently, for any sufficiently large number $N$, $\langle G_0 \rangle \sim 1$ in the region $D := \{\eta_1 \sim N, , 2\eta_2 + \eta_1 \sim N^{-1/2}\}$.
On the one hand, $\langle G_0 \rangle$ is ideally small in $D$. On the other hand, the volume of $D$ is large enough to determine the threshold $s^* = \frac34$. 
	
	\item If $\beta > 0$, then by introducing $\beta_1 = \sqrt{\beta/3}$, $G_0$ can be further factored as follows:
	\[
		G_0(\eta_1, \eta_2, \eta_3) = -3 \eta_1 (2\eta_2 + \eta_1 + \b_1)(2\eta_2 + \eta_1 - \b_1).
	\]
	Since $\beta_1 \neq 0$, at most one of the terms $2\eta_2 + \eta_1 + \beta_1$ and $2\eta_2 + \eta_1 - \beta_1$ can fall below a constant level. This contrasts with the case of $\beta = 0$, where the term $2\eta_2 + \eta_1$ has a power of 2 in $G_0$. Therefore, in this case, the threshold $s^*$ can be slightly smaller.
	
	\item If $\beta < 0$, then regardless of the relationship between $\eta_1$ and $\eta_2$, the magnitude of $\langle G_0 \rangle$ is at least $N$ when $\eta_1 \sim N$. This large lower bound for $\langle G_0 \rangle$ can significantly reduce the threshold $s^*$ to $\frac14$.
	
\end{itemize}
When $\a \neq 4$, the function $f$ either has no root or has two distinct roots. As a result, the influence of the linear term $\beta \eta_1$ in (\ref{G0intro}) is overshadowed by the cubic term $\eta_1^3 f(\eta_2 / \eta_1)$. This explains why the value of $\beta$ does not affect the threshold $s^*$ when $\a \neq 4$.

The remainder of this paper is structured as follows. In Section \ref{Sec, bilin}, we establish crucial bilinear estimates that can be utilized to demonstrate analytical well-posedness when $s > s^*$ in Theorem \ref{Thm, main_R}. Subsequently, in Section \ref{Sec, pf-main}, we complete the proof for Theorem \ref{Thm, main_R} by establishing the ill-posedness when $s < s^*$. Finally, we confirm Theorem \ref{Thm, auxi_R} in Section \ref{Sec, pf-auxi}.

\section{Bilinear Estimates}
\label{Sec, bilin}

\subsection{Preliminaries}
\label{Subsec, Prelim}
To establish the well-posedness component in Theorem \ref{Thm, main_R}, it is sufficient to establish bilinear estimates within the Fourier restriction space, as detailed in Proposition \ref{Prop, bilin-R} below. This method, originally introduced by Bourgain \cite{Bou93a, Bou93b} and Kenig-Ponce-Vega \cite{KPV96} for studying the KdV equation, involves the definition of the Fourier restriction space.

Let $\alpha, \beta \in \mathbb{R}$ with $\alpha \neq 0$, and denote the polynomial $\phi^{\alpha,\beta}$ as
\be\label{phase fn}
\phi^{\alpha, \beta}(\xi) = \alpha \xi^{3} - \beta \xi, \quad\forall, \xi \in \mathbb{R}.
\ee
For convenience, $\phi^{\alpha,0}$ is denoted as $\phi^{\alpha}$. The Fourier restriction space is then defined as follows.

\begin{definition}\label{Def, FR space on R}
	For any $\alpha, \beta, s, b \in \mathbb{R}$ with $\alpha \neq 0$, the Fourier restriction space $X^{\alpha,\beta}{s, b}(\mathbb{R}^2)$ is defined as the completion of the Schwartz space $\mathscr{S}(\mathbb{R}^{2})$ with the norm
	\be\label{FR norm on R}
		\|w\|_{ X^{\a,\b}_{s,b}( \m{R}^2 ) } = \|\la \xi \ra ^{s} \la \tau - \phi^{\a,\b}(\xi) \ra^{b} \wh{w}(\xi,\tau)\|_{ L^{2}_{\xi\tau} (\m{R}^2) },
	\ee
	where $\la \cdot \ra = 1 + |\cdot|$, $\phi^{\a,\b}$ is given by (\ref{phase fn}), and $\wh{w}$ refers to the space-time Fourier transform of $w$. Furthermore, $X^{\alpha,0}_{s,b}(\mathbb{R}^2)$ is abbreviated as $X^{\alpha}_{s,b}(\mathbb{R}^2)$.
\end{definition}

Similar to the KdV equation, (\ref{m-MB}) exhibits sub-critical behavior when $s > -\frac32$. Exploiting this characteristic, one can rescale the equation, making the initial data small and facilitating the establishment of well-posedness. Following the approach in \cite{KPV96, YZ22a}, we introduce the functions $u^{\lambda}$ and $v^{\lambda}$ for $\lambda \geq 1$ as follows:
\be\label{scaled fn}
	\left\{
	\begin{aligned}
		u^{\lam}(x,t) = \lam^{-2} u(\lam^{-1} x, \lam^{-3} t), \\
		v^{\lam}(x,t) = \lam^{-2} v(\lam^{-1} x, \lam^{-3} t), 
	\end{aligned}
	\qquad x\in\m{R}, \,t\in\m{R}.
	\right.
\ee
Then (\ref{m-MB}) can be reformulated as the following system:
\be\label{m-MB-scaled}
	\left\{
	\begin{array}{rcl}
		u^{\lam}_{t} + u^{\lam}_{xxx} &=& - v^{\lam} v^{\lam}_x,\\
		v^{\lam}_{t} + \a v^{\lam}_{xxx} + \lam^{-2} \b v^{\lam}_x &=& - (u^{\lam} v^{\lam})_{x},\\
		( u^{\lam}, v^{\lam} )|_{t=0} &=& (u^{\lam}_0, v^{\lam}_0) \in \mathscr{H}^{s}(\m{R}),
	\end{array}
	\right. \qquad x \in \m{R}, \, t \in \m{R}.
\ee
where $	\big( u^{\lam}_{0}(x), v^{\lam}_{0}(x) \big) = \lam^{-2} \big( u_{0}(\lam^{-1} x), v_{0}(\lam^{-1} x)\big)$.
Given that $\lambda \geq 1$ and $s \geq 0$, it follows that
\[
	\|u^{\lam}_{0}\|_{ H^{s}(\m{R}) } \leq \lam^{-\frac32} \|u_{0}\|_{ H^{s}(\m{R}) }, \quad 
	\|v^{\lam}_{0}\|_{ H^{s}(\m{R}) } \leq \lam^{-\frac32} \|v_{0}\|_{ H^{s}(\m{R}) }.
\]
Consequently, as $\lambda \to \infty$, both the size of the initial data and the coefficient of the lower-order term $v^{\lambda}{x}$ decay to 0, i.e.,
\be\label{small coef}
	\lim_{\lam \to 0} \lam^{-2} |\b| = 0, \quad 
	\lim_{\lam \to 0} \| (u^{\lam}_0, v^{\lam}_0) \|_{\mathscr{H}^s(\m{R})} = 0.
\ee
With (\ref{small coef}), we can assume the coefficient of the first-order terms to be small, reflected in the condition $0< |\beta| \leq 1$ in the following proposition.

\begin{proposition}\label{Prop, bilin-R}
Consider $\alpha = 4$ and assume one of the two conditions below:

	\begin{itemize}
		\item[(i)] $0< \b \leq 1$, $s \geq \frac12$ and $\frac12 < b \leq \frac34$;
		\item[(ii)] $-1 \leq \b < 0$, $s > \frac14$ and $\frac12 < b \leq s + \frac14$.
	\end{itemize}
	Then there exists a constant $C = C(s, b)$ such that the following bilinear estimates hold.
	\begin{eqnarray}
		\| v v_{x} \|_{X^{1}_{s, b- 1}( \m{R}^2 )} &\leq & C \| v \|_{X^{\a, \b}_{s, b}( \m{R}^2 )}^2, 
		\label{bilin-R1} \\
		\| (uv)_{x} \|_{X^{\a, \b}_{s, b-1}( \m{R}^2 )} & \leq & C \| u \|_{X^{1}_{s, b}( \m{R}^2 )} \| v \|_{X^{\a, \b}_{s, b}( \m{R}^2 )}, 
		\label{bilin-R2}		
	\end{eqnarray}
where $u, v \in \mathscr{S}(\m{R}^{2})$.
\end{proposition}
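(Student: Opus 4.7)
The plan is to reduce \eqref{bilin-R1} and \eqref{bilin-R2} to weighted trilinear integrals over the convolution manifold $\{\xi_1+\xi_2+\xi_3=0,\ \tau_1+\tau_2+\tau_3=0\}$ by Plancherel and duality, with weights of the form $\langle\xi_i\rangle^{\pm s}\langle\sigma_i\rangle^{\pm b}$, where $\sigma_i=\tau_i-\phi_i(\xi_i)$ is the modulation of the $i$-th factor and $\phi_i$ is either $\phi^{1}$ or $\phi^{4,\beta}$ depending on the space in which that factor lies. The engine of the argument is the pigeonhole inequality $\max_i\langle\sigma_i\rangle\gtrsim\langle G_0\rangle$ with $G_0=\sigma_1+\sigma_2+\sigma_3$, which trades a lower bound on the resonance function for a genuine gain in one of the $\langle\sigma_i\rangle$ that can then be absorbed elsewhere in the weights.

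A direct computation using $\alpha=4$ produces
\[
	G_0 = (\xi_1+\xi_2)\bigl[3(\xi_1-\xi_2)^2-\beta\bigr]\quad\text{for \eqref{bilin-R1}},
\]
\[
	G_0 = -\xi_1\bigl[3(\xi_1+2\xi_2)^2-\beta\bigr]\quad\text{for \eqref{bilin-R2}},
\]
matching (up to labelling) the expression \eqref{G0intro} already isolated in the introduction. For $\beta<0$, each bracket is bounded below by $|\beta|\gtrsim 1$, so $|G_0|\gtrsim|\xi_1+\xi_2|$ (respectively $|\xi_1|$), uniformly in the frequencies. This is the same type of first-power gain that drives the classical KdV bilinear estimate, and I would close the $\beta<0$ case by the standard Kenig-Ponce-Vega dyadic scheme: split on which of $\langle\sigma_1\rangle,\langle\sigma_2\rangle,\langle\sigma_3\rangle$ is maximal, apply Cauchy-Schwarz in two of the three frequency variables, and integrate the remaining variable against the Jacobian of the change of variables into $\sigma$. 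The constraint $b\leq s+1/4$ in part (ii) encodes exactly the balance between the $\langle\sigma\rangle^{-2b}$ loss and the $|G_0|$ gain.

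For $\beta>0$, set $\beta_*=\sqrt{\beta/3}$ and factor, for example $G_0=-3\xi_1(\xi_1+2\xi_2-\beta_*)(\xi_1+2\xi_2+\beta_*)$ in \eqref{bilin-R2}. Since the two linear factors differ by $2\beta_*\neq 0$ they cannot be simultaneously small, and I would split the frequency domain into a non-resonant region (the ``safe'' factor is $\gtrsim 1$ and we recover $|G_0|\gtrsim|\xi_1|$, reducing to the $\beta<0$ mechanism) and two thin resonant slabs of $O(1)$-thickness around $\xi_1+2\xi_2=\pm\beta_*$. Inside a slab only a single linear vanishing of $G_0$ survives, so the gain degrades from the $|\xi|^{2}$ that a genuine double root would supply (the obstruction that forces $s^{*}=3/4$ when $\beta=0$) down to $|\xi|^{1}$; but a slab is codimension one in the $(\xi_1,\xi_2)$-plane. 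I would exploit this by the change of variables $\eta=\xi_1+2\xi_2\mp\beta_*$, integrating $\eta$ directly over $|\eta|\lesssim 1$ and reducing the remaining estimate to a one-dimensional Cauchy-Schwarz that closes precisely for $s\geq 1/2$ and $b\in(1/2,3/4]$.

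The principal obstacle will be the resonant slabs in the $\beta>0$ case: inside each slab the modulation gain is strictly weaker than outside, and the whole point of the argument is that the codimension-one geometry of the slab exactly compensates for this loss, so that the threshold drops from $3/4$ to $1/2$ rather than remaining pinned at $3/4$. Once the non-resonant/resonant dichotomy and the slab parametrization are in place, the estimate \eqref{bilin-R1} is structurally parallel to \eqref{bilin-R2}, with $\xi_1+\xi_2$ playing the role of $\xi_1$ and the bracket $3(\xi_1-\xi_2)^2-\beta$ playing the role of $3(\xi_1+2\xi_2)^2-\beta$; the same case split and change of variables carry over, yielding both estimates simultaneously under the stated conditions on $s$ and $b$.
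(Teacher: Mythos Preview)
Your overall strategy is correct and matches the paper's: reduce via duality and Plancherel to a weighted trilinear convolution estimate, compute the resonance function $H=\sum L_i$ for $\alpha=4$, use the pigeonhole $\max_i\langle L_i\rangle\gtrsim\langle H\rangle$, and for $\beta>0$ split off the resonant slabs where $H$ can vanish. For $\beta<0$ and for the non-resonant part when $\beta>0$, your scheme coincides with the paper's (the paper's Case~3 in Sections~\ref{Subsubsec, bl1-bn} and~\ref{Subsubsec, bl2-bp} is exactly the modulation-maximum split you describe).

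Two points deserve comment. First, for \eqref{bilin-R1} with $\beta>0$ the paper does \emph{not} use your slab dichotomy. Because $L_1+L_2$ is only quadratic in $\xi_2$ there, the paper applies its sharp quadratic integral lemma (Lemma~\ref{Lemma, int for quad}) to obtain
\[
\int\frac{d\xi_2}{\langle L_1+L_2\rangle^{2b}}\lesssim |\xi_3|^{-1/2}\,\langle\xi_3^3-\beta\xi_3-\tau_3\rangle^{-1/2},
\]
and then closes with the algebraic observation $\langle L_3\rangle\langle\xi_3^3-\beta\xi_3-\tau_3\rangle\ge\langle\beta\xi_3\rangle$. This avoids the slab decomposition entirely for \eqref{bilin-R1}; your slab route would also work, but it is a genuinely different (slightly longer) argument. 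For \eqref{bilin-R2} the polynomial $L_1+L_2$ is cubic in $\xi_2$, the cubic lemma is weaker, and the paper \emph{does} use exactly your slab split (its Case~3.1 vs.\ 3.2).

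Second, be careful about what actually closes the slab estimate at $s\ge\tfrac12$. Merely invoking ``codimension-one geometry'' and integrating $|\eta|\lesssim 1$ gives only $\int_{\text{slab}} d\xi_2/\langle L_1+L_2\rangle^{2b}\lesssim 1$, and the remaining factor $|\xi_3|^{2-2s}/\langle L_3\rangle^{2-2b}$ forces $s\ge 1$, not $s\ge\tfrac12$. The mechanism you need (and which the paper uses in its Case~3.1) is that inside the slab $|\xi_2-\xi_3|\sim\beta_*$ (equivalently $|\xi_1-\xi_2|\sim\beta_*$), so the derivative of $L_1+L_2$ in $\xi_2$ satisfies $|P'(\xi_2)|\gtrsim|\xi_3|$; the change of variables then yields $\int_{\text{slab}} d\xi_2/\langle L_1+L_2\rangle^{2b}\lesssim|\xi_3|^{-1}$, which is precisely the extra factor that brings the threshold down to $s\ge\tfrac12$. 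Your sketch is compatible with this, but the phrase ``integrating $\eta$ directly over $|\eta|\lesssim 1$'' undersells the real source of the gain.
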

To establish the aforementioned bilinear estimates, we require some elementary auxiliary lemmas.
\begin{lemma}\label{Lemma, int in tau}
	Let $\rho_{1}>1$ and $0\leq\rho_{2}\leq\rho_{1}$ be given. There exists  a constant $C=C(\rho_{1},\rho_{2})$ such that for any $\a,\b\in\m{R}$,
	\be\label{int in tau}
		\int_{-\infty}^{\infty} \frac{dx}{\la x - \a \ra^{\rho_{1}} \la -x - \b \ra^{\rho_{2}} } \leq \frac{C}{\la \a + \b \ra^{\rho_{2}} }.
	\ee
\end{lemma}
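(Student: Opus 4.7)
The plan is to reduce the integral to a one-parameter form and then carry out a standard trichotomy on the real line. First, substitute $y = x-\alpha$ and use that $\langle -x-\beta\rangle = \langle x+\beta\rangle$ (since $\langle\cdot\rangle$ depends only on absolute value) to rewrite the left-hand side of (\ref{int in tau}) as
\[
	I(\gamma) \,:=\, \int_{-\infty}^{\infty} \frac{dy}{\langle y\rangle^{\rho_1}\langle y+\gamma\rangle^{\rho_2}}, \qquad \gamma \,:=\, \alpha+\beta,
\]
so the task is reduced to showing $I(\gamma) \leq C\langle\gamma\rangle^{-\rho_2}$ with $C = C(\rho_1,\rho_2)$. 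For $|\gamma|$ bounded (say $|\gamma|\leq 10$) the bound is immediate: $\langle\gamma\rangle^{-\rho_2}\sim 1$ and $\int\langle y\rangle^{-\rho_1}\,dy<\infty$ because $\rho_1>1$. So I may assume $|\gamma|$ is large.

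For large $|\gamma|$ I would split $\mathbb{R}$ into three pieces: $A = \{|y|\leq |\gamma|/2\}$, $B = \{|y+\gamma|\leq |\gamma|/2\}$, and $C = \mathbb{R}\setminus(A\cup B)$. On $A$, the triangle inequality gives $|y+\gamma|\geq |\gamma|/2$, so $\langle y+\gamma\rangle\gtrsim\langle\gamma\rangle$ and the contribution is at most $C\langle\gamma\rangle^{-\rho_2}\int\langle y\rangle^{-\rho_1}\,dy\lesssim \langle\gamma\rangle^{-\rho_2}$ (again using $\rho_1>1$). On $C$, both $\langle y\rangle$ and $\langle y+\gamma\rangle$ exceed a multiple of $\langle\gamma\rangle$, so bounding $\langle y+\gamma\rangle^{-\rho_2}\lesssim \langle\gamma\rangle^{-\rho_2}$ and using the convergent tail of $\int\langle y\rangle^{-\rho_1}\,dy$ gives the same bound.

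The genuine work is on the region $B$. There $|y|\geq|\gamma|/2$, hence $\langle y\rangle^{-\rho_1}\lesssim \langle\gamma\rangle^{-\rho_1}$, and after the substitution $z = y+\gamma$ the contribution is at most
\[
	C\,\langle\gamma\rangle^{-\rho_1}\int_{|z|\leq|\gamma|/2} \langle z\rangle^{-\rho_2}\,dz.
\]
Here I would distinguish three subcases according to how $\rho_2$ compares with $1$: if $\rho_2>1$ the inner integral is $O(1)$, giving $\langle\gamma\rangle^{-\rho_1}\leq\langle\gamma\rangle^{-\rho_2}$ by $\rho_1\geq\rho_2$; if $\rho_2<1$ the inner integral is of order $\langle\gamma\rangle^{1-\rho_2}$, giving $\langle\gamma\rangle^{1-\rho_1-\rho_2}\leq\langle\gamma\rangle^{-\rho_2}$ because $\rho_1\geq 1$; if $\rho_2=1$ one picks up a $\log\langle\gamma\rangle$ factor which is absorbed by any tiny gap in $\rho_1>1$ (or by splitting the $\langle y\rangle^{-\rho_1}$ factor slightly more carefully).

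The main obstacle, and indeed the only point where the full hypothesis $\rho_2\leq\rho_1$ and $\rho_1>1$ is essential, is this last subcase analysis on $B$; everywhere else the integrals are absolutely convergent by $\rho_1>1$ alone. Once the trichotomy is settled, summing the three regional bounds yields $I(\gamma)\lesssim\langle\gamma\rangle^{-\rho_2}$, which is exactly (\ref{int in tau}).
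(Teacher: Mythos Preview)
Your argument is correct and is exactly the standard splitting argument the paper has in mind; the paper itself omits the proof entirely, calling it ``standard,'' so there is nothing to compare against. The trichotomy on $A$, $B$, $C$ together with the three subcases on $B$ according to $\rho_2\lessgtr 1$ is the usual way this estimate is proved, and your handling of the $\rho_2=1$ logarithm via the strict inequality $\rho_1>1$ is fine.
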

The proof for this lemma is standard and therefore omitted. It is worth noting that $\langle \alpha + \beta \rangle = \langle (x - \alpha) + (-x - \beta) \rangle$, an observation that will be utilized in subsequent estimates.

\begin{lemma}\label{Lemma, bdd int}
	If $\rho > \frac{1}{2}$, then there exists $C = C(\rho)$ such that for any $\sigma_{i} \in \mathbb{R}$, $0 \leq i \leq 2$, with $\sigma_{2} \neq 0$,
	\be\label{bdd int for quad}
		\int_{-\infty}^{\infty} \frac{dx}{\la \sigma_{2}x^{2} + \sigma_{1} x + \sigma_{0} \ra^{\rho}}\leq \frac{C}{|\sigma_{2}|^{1/2}}.
	\ee
	Similarly, if $\rho > \frac{1}{3}$, then there exists $C = C(\rho)$ such that for any $\sigma_{i} \in \mathbb{R}$, $0 \leq i \leq 3$, with $\sigma_{3} \neq 0$,
	\be\label{bdd int for cubic}
		\int_{-\infty}^{\infty} \frac{dx}{\la \sigma_{3}x^{3} + \sigma_{2}x^{2} + \sigma_{1} x  + \sigma_{0} \ra^{\rho} }\leq \frac{C}{|\sigma_{3}|^{1/3}}.
	\ee
\end{lemma}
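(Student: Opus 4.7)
The strategy is a linear change of variables in $x$ to normalize the polynomial appearing in the denominator; this extracts the prefactor $|\sigma_n|^{-1/n}$ from the Jacobian and reduces the problem to a uniform bound, in the remaining lower-order coefficients, on an integral depending only on $\rho$.

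For (\ref{bdd int for quad}), substituting $y = \sqrt{|\sigma_2|}\bigl(x + \sigma_1/(2\sigma_2)\bigr)$ gives $dy = \sqrt{|\sigma_2|}\,dx$ and $\sigma_2 x^2 + \sigma_1 x + \sigma_0 = \epsilon(y^2 - c)$ with $\epsilon = \operatorname{sgn}(\sigma_2)$ and some $c \in \m{R}$. It therefore suffices to show $\int_{\m{R}} \la y^2 - c\ra^{-\rho}\,dy \leq C(\rho)$ uniformly in $c$. For $c \leq 1$ this follows at once from $\la y^2 - c\ra \gs 1 + y^2$. For $c > 1$ I would split at $|y| = 2\sqrt{c}$: the outer piece satisfies $|y^2 - c| \geq 3y^2/4$ and yields a convergent tail via $\int y^{-2\rho}\,dy$ once $\rho > 1/2$; on the inner piece I use $y^2 - c = (y - \sqrt{c})(y + \sqrt{c})$ with $|y + \sqrt{c}| \sim \sqrt{c}$ near $y = \pm\sqrt{c}$, substitute $v = \sqrt{c}(y - \sqrt{c})$, and obtain $c^{-1/2}\int \la v\ra^{-\rho}\,dv$ on an interval of length $\sim c$; a brief case analysis on $\rho > 1$, $\rho = 1$, and $1/2 < \rho < 1$ then yields the desired $c$-uniform bound.

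For (\ref{bdd int for cubic}), the substitution $y = |\sigma_3|^{1/3}\bigl(x + \sigma_2/(3\sigma_3)\bigr)$ eliminates the quadratic term and reduces the claim to bounding $\int \la \pm y^3 + py + q\ra^{-\rho}\,dy$ by $C(\rho)$, uniformly in $p, q \in \m{R}$. The cleanest route is the classical Remez-type sub-level set estimate: for every monic polynomial $P$ of degree $n$ over $\m{R}$,
\[
	\bigl|\{y \in \m{R} : |P(y)| \leq \lambda\}\bigr| \leq 2n\,\lambda^{1/n},
\]
which follows from factoring $P(y) = \prod_i (y - r_i)$ over $\m{C}$ and noting that $\prod_i |y - r_i| \leq \lambda$ forces $|y - r_i| \leq \lambda^{1/n}$, hence $|y - \operatorname{Re} r_i| \leq \lambda^{1/n}$, for some $i$. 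Combined with the layer-cake identity this gives
\[
	\int_{\m{R}} \frac{dy}{\la P(y)\ra^{\rho}} \leq \int_0^1 \bigl|\{y : |P(y)| \leq t^{-1/\rho}\}\bigr|\,dt \leq 2n \int_0^1 t^{-1/(n\rho)}\,dt,
\]
which is finite precisely when $\rho > 1/n$, matching the hypotheses for both estimates (and in fact giving a unified proof that also covers the quadratic case $n=2$).

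The main obstacle is establishing uniformity of the bound in the lower-order coefficients that survive normalization. These parameters control the location and multiplicity of the real roots of $P$, near which $\la P\ra^{-\rho}$ is comparable to $1$ over sets of nontrivial measure. The Remez-type sub-level set bound captures exactly the right balance between the size of these near-root sets and the polynomial growth of $|P|$ away from them, and is the technical heart of the argument.
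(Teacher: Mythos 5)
Your argument is correct, and it is genuinely different from what the paper does: the paper does not prove Lemma \ref{Lemma, bdd int} at all, but instead defers to the proof of Lemma 2.5 in \cite{BOP97}, which establishes the cubic bound (\ref{bdd int for cubic}) by a direct analysis of the polynomial (splitting the line according to where the polynomial and its derivative are large or small and changing variables $u = P(x)$), and then asserts that the same argument gives the quadratic case. Your route is self-contained and rests on a different mechanism: after rescaling by $|\sigma_n|^{1/n}$ to make the polynomial monic (which is where the prefactor $|\sigma_n|^{-1/n}$ comes from), you combine the layer-cake formula with the sublevel-set estimate $\big|\{y : |P(y)| \leq \lambda\}\big| \leq 2n\,\lambda^{1/n}$ for monic $P$ of degree $n$, proved by factoring over $\m{C}$ and noting that $y$ must lie within $\lambda^{1/n}$ of the real part of some root. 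This yields the clean bound $2n\int_0^1 t^{-1/(n\rho)}\,dt$, finite exactly when $\rho > 1/n$, and it treats the quadratic and cubic cases (and indeed any degree) uniformly in all lower-order coefficients with no case analysis; your separate hands-on treatment of the quadratic case is therefore redundant but also correct. What the paper's cited approach buys is essentially nothing extra here; what yours buys is a short, unified, fully self-contained proof with an explicit constant $C(\rho) = 2n^2\rho/(n\rho-1)$.
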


\begin{proof}
	We refer the reader to the proof of Lemma 2.5 in \cite{BOP97} where (\ref{bdd int for cubic}) was  proved. The similar argument can also be applied to  obtain (\ref{bdd int for quad}).
\end{proof}

If the power $\rho$ in Lemma \ref{Lemma, bdd int} is greater than 1, then stronger estimates hold, as indicated by Lemma \ref{Lemma, int for quad} and Lemma \ref{Lemma, int for cubic} below, the proofs of which can be found in \cite{YZ22a}.

\begin{lemma}\label{Lemma, int for quad}
	Let $\rho>1$ be given. There exists  a constant $C=C(\rho)$ such that for any $\sigma_{i}\in\m{R},\,0\leq i\leq 2$, with $\sigma_{2}\neq 0$, 
	\be\label{int for quad}
		\int_{-\infty}^{\infty} \frac{dx}{\la \sigma_{2}x^{2} + \sigma_{1} x + \sigma_{0} \ra^{\rho} } \leq C\, |\sigma_{2}|^{-\frac12} \Big\la \sigma_{0} - \frac{\sigma_{1}^{2}}{4\sigma_{2}} \Big\ra^{-\frac12}.
	\ee
\end{lemma}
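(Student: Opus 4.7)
The plan is to reduce the integral to a one-parameter model integral by completing the square and rescaling, then control the model integral by splitting on the sign of the discriminant.

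First, I would write $\sigma_2 x^2 + \sigma_1 x + \sigma_0 = \sigma_2 \bigl(x + \tfrac{\sigma_1}{2\sigma_2}\bigr)^2 + A$ where $A = \sigma_0 - \tfrac{\sigma_1^2}{4\sigma_2}$ is exactly the quantity appearing on the right side of (\ref{int for quad}). Translating by $y = x + \tfrac{\sigma_1}{2\sigma_2}$ is measure-preserving, and the substitution $z = |\sigma_2|^{1/2} y$ extracts a factor of $|\sigma_2|^{-1/2}$. What remains is to prove that, for $\epsilon \in \{+1,-1\}$ and $A \in \m{R}$,
\[
	I(\epsilon, A) := \int_{-\infty}^{\infty} \frac{dz}{\la \epsilon z^2 + A \ra^{\rho}} \leq C \la A \ra^{-1/2},
\]
with $C = C(\rho)$. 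Once this is established, the lemma follows.

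For the model integral, I would split into two cases based on whether $\epsilon z^2 + A$ has real zeros. When $\epsilon A \geq 0$ we have $|\epsilon z^2 + A| = z^2 + |A|$, so $\la \epsilon z^2 + A \ra \geq \max(\la z^2 \ra, \la A \ra)$. Splitting the $z$-integral at $|z| = \la A \ra^{1/2}$, the inner part contributes $\ls \la A\ra^{1/2}\la A\ra^{-\rho}$ and the outer part contributes $\ls \int_{\la A\ra^{1/2}}^{\infty} z^{-2\rho}\, dz \ls \la A\ra^{1/2 - \rho}$. Since $\rho > 1$ gives $\frac12 - \rho \leq -\frac12$, both are $\ls \la A\ra^{-1/2}$. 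When $\epsilon A < 0$, set $B = |A|$, so $\epsilon z^2 + A$ vanishes at $z = \pm B^{1/2}$. The dominant contribution comes from neighborhoods of these roots; near $z = B^{1/2}$, substituting $z = B^{1/2} + w$ gives $\epsilon z^2 + A = \epsilon w(2B^{1/2} + w)$, comparable to $2\epsilon B^{1/2} w$ for $|w| \leq B^{1/2}/2$, so the local contribution is
\[
	\int_{-B^{1/2}/2}^{B^{1/2}/2} \frac{dw}{\la 2 B^{1/2} w \ra^{\rho}} \leq \frac{1}{2 B^{1/2}} \int_{-\infty}^{\infty} \frac{du}{\la u \ra^{\rho}} \ls B^{-1/2},
\]
using the convergence of $\int \la u \ra^{-\rho}\,du$ which requires precisely $\rho > 1$. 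The analogous bound holds near $z = -B^{1/2}$, and the region $|z| \geq 2 B^{1/2}$ is treated as in the first case since there $|\epsilon z^2 + A| \gs z^2 + B$.

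The main obstacle is the second case, where one needs to (i) choose the neighborhood radii carefully so the linearization of $\epsilon z^2 + A$ at each root controls the integrand on a scale of order $B^{1/2}$, and (ii) observe that the gain of $\la A\ra^{-1/2}$ over Lemma \ref{Lemma, bdd int} comes entirely from using $\rho > 1$ rather than only $\rho > 1/2$, since the latter no longer suffices to integrate $\la u \ra^{-\rho}$ globally. Everything else is bookkeeping: translation and dilation invariance of Lebesgue measure transfer the clean bound on $I(\epsilon, A)$ to the original integral with constants depending only on $\rho$.
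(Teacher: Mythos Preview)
The paper does not prove this lemma itself; it simply cites \cite{YZ22a}. Your approach---complete the square, rescale to the model integral $I(\epsilon,A)=\int_{\m{R}}\la\epsilon z^2+A\ra^{-\rho}\,dz$, then split on the sign of $\epsilon A$---is the natural one and is essentially correct.

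Two small points to tighten in the case $\epsilon A<0$, with $B=|A|$. First, your near-root estimate comes out as $\ls B^{-1/2}$, but the target is $\la A\ra^{-1/2}$, and these disagree when $B\ll 1$. The fix is immediate: for $|A|\le 1$ one only needs $I(\epsilon,A)\ls 1$, which follows either from Lemma~\ref{Lemma, bdd int} or by noting that $\tfrac{1}{2B^{1/2}}\int_{-B}^{B}\la u\ra^{-\rho}\,du\ls B^{1/2}\le 1$ \emph{before} you enlarge the domain of integration to all of $\m{R}$. Second, your partition omits the ranges $|z|\le B^{1/2}/2$ and $3B^{1/2}/2\le|z|\le 2B^{1/2}$; on both of these $|z^2-B|\gs B$, so they are absorbed harmlessly into the far-region estimate. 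Neither of these is a missing idea---just bookkeeping, as you say.
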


\begin{lemma}\label{Lemma, int for cubic}
	Let $\rho>1$ be given. There exists a constant $C=C(\rho)$ such that  for any $\sigma_{i}\in\m{R},\,0\leq i\leq 2$,
	\be\label{int for cubic}
		\int_{-\infty}^{\infty} \frac{dx}{\la x^3 + \sigma_{2} x^{2} + \sigma_{1} x + \sigma_{0} \ra^{\rho}} \leq C \big \la 3\sigma_{1} - \sigma_{2}^{2} \big \ra^{-\frac14}.
	\ee
\end{lemma}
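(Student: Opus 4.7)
My approach is to reduce to a depressed cubic and then analyze its critical structure. The translation $y = x + \sigma_2/3$ eliminates the quadratic coefficient, so that $x^3 + \sigma_2 x^2 + \sigma_1 x + \sigma_0 = y^3 + p y + q$ with $p = \sigma_1 - \sigma_2^2/3$ and some $q \in \mathbb{R}$. Since the Jacobian of the translation is one and $3p = 3\sigma_1 - \sigma_2^2$, it suffices to prove
\[
J(p, q) \;:=\; \int_{-\infty}^{\infty} \frac{dy}{\langle y^3 + p y + q\rangle^{\rho}} \;\leq\; C \langle p\rangle^{-1/4}
\]
uniformly in $q$. Lemma \ref{Lemma, bdd int} already yields $J \leq C$, so it is enough to establish $J \leq C |p|^{-1/4}$ when $|p| \geq 1$.

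For $p \geq 1$, $f(y) := y^3 + py + q$ satisfies $f'(y) = 3y^2 + p \geq p$, so $f$ is strictly monotone, and the substitution $u = f(y)$ immediately gives $J(p,q) \leq (1/p)\int du/\langle u\rangle^\rho \leq C/p \leq C/p^{1/4}$, which is stronger than needed.

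The main case is $p \leq -1$. Write $p = -3a^2$ with $a \gtrsim 1$. The critical points of $f$ lie at $\pm a$ and $f'(y) = 3(y-a)(y+a)$. I partition $\mathbb{R}$ into the two critical windows $W_\pm = \{|y \mp a| \leq a/2\}$ and the complementary region $R$, on which $|f'| \gtrsim a^2$. A direct change of variables then bounds the contribution from $R$ by $C/a^2$, which is negligible compared to $C/\sqrt{a}$. In $W_+$ (the window $W_-$ is symmetric under $y \mapsto -y$), the shift $w = y - a$ gives the exact identity $f(y) = \tilde q + w^2 (3a + w)$ with $\tilde q := q - 2a^3$, and for $|w| \leq a/2$ the factor $3a + w$ lies in $[5a/2, 7a/2]$, so is comparable to $a$.

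To extract the $a^{-1/2}$ gain on $w \in [0, a/2]$ (the piece $[-a/2, 0]$ is handled symmetrically), I substitute $v = w^2(3a + w)$, which is monotone there with $dv/dw = 3w(2a + w) \gtrsim a w \gtrsim \sqrt{a v}$. The local integral reduces to
\[
\int_0^{v(a/2)} \frac{dv}{|dv/dw|\, \langle \tilde q + v\rangle^{\rho}} \;\leq\; \frac{C}{\sqrt{a}} \int_0^{\infty} \frac{dv}{\sqrt{v}\, \langle \tilde q + v\rangle^{\rho}} \;\leq\; \frac{C}{\sqrt{a}},
\]
where the last inequality is uniform in $\tilde q$ for $\rho > 1/2$ (verified by splitting the $v$-integral at $v \sim |\tilde q|$ and controlling each piece by elementary one-dimensional estimates). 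Summing over the four pieces gives $J \leq C/\sqrt{a} = C|p|^{-1/4} \lesssim \langle 3\sigma_1 - \sigma_2^2\rangle^{-1/4}$, as desired. The principal difficulty is precisely this local analysis, since the bound is saturated exactly when $\tilde q \approx 0$, i.e., when $f$ has a double root at $y = a$; one must verify that the cubic remainder $w^3$ is truly absorbed by $3aw^2$ on $|w| \leq a/2$ and that the $\tilde q$-dependence drops out after the $v$-split.
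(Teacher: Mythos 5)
Your proof is correct and complete. Note that the paper itself does not prove this lemma; it only cites \cite{YZ22a}, so there is no in-text argument to compare against line by line. Your route --- depress the cubic so that the quantity $3\sigma_1-\sigma_2^2$ becomes (three times) the minimum of the derivative, dispatch the monotone case $p\geq 1$ by a direct change of variables, and in the case $p=-3a^2\leq -1$ isolate the two critical windows $|y\mp a|\leq a/2$ where the local model $f=\tilde q+w^2(3a+w)$ with $3a+w\sim a$ produces the $a^{-1/2}=|p|^{-1/4}$ gain --- is the natural argument for this kind of estimate and is in the same spirit as the proofs of the companion Lemmas 2.4--2.5 in the cited reference. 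I checked the computations: $p=\sigma_1-\sigma_2^2/3$ so $3p=3\sigma_1-\sigma_2^2$; the shift identity $f=\tilde q+w^2(3a+w)$ with $\tilde q=q-2a^3$ is exact; $dv/dw=3w(2a+w)\gtrsim a|w|\gtrsim\sqrt{av}$ on $|w|\leq a/2$; and $\int_0^\infty v^{-1/2}\langle\tilde q+v\rangle^{-\rho}\,dv$ is indeed bounded uniformly in $\tilde q$ for $\rho>1/2$ (split at $v\sim|\tilde q|$), which is more than covered by the hypothesis $\rho>1$. Two small points that a written version should make explicit, though both are routine: on the outer region $R$ the change of variables $u=f(y)$ must be performed separately on each of the three connected components of $R$, each of which you should observe is contained in a single monotonicity interval of $f$ (it is: the components are $(-\infty,-3a/2)$, $(-a/2,a/2)$, $(3a/2,\infty)$); and the window $W_-$ is reduced to $W_+$ by $y\mapsto -y$ only after replacing $q$ by $-q$, which is harmless since all your bounds are uniform in $q$.
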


For the proof of the bilinear estimate, it is often advantageous to transform it into an estimate involving a weighted convolution of $L^2$ functions, as emphasized in \cite{Tao01, CKSTT03}. The following lemma provides such an example for a general bilinear estimate, and its proof follows standard techniques utilizing duality and the Plancherel theorem. To streamline notation, we use $\vec{\xi}=(\xi_{1},\xi_{2},\xi_{3})$ and $\vec{\tau}=(\tau_{1},\tau_{2},\tau_{3})$ to represent vectors in $\mathbb{R}^{3}$. Additionally, we define
\be\label{int domain}
	A := \Big\{ (\vec{\xi}, \vec{\tau}) \in \m{R}^{6}: \sum_{i=1}^{3} \xi_{i} = \sum_{i=1}^{3} \tau_{i} = 0 \Big\}.
\ee

\begin{lemma}\label{Lemma, bilin to weighted l2}
	Given $s$, $b$ and $\{(\a_{i},\b_{i})\}_{ 1\leq i\leq 3 }$,  the bilinear estimate
	\[\|\p_{x}(w_{1}w_{2})\|_{X^{\a_{3},\b_{3}}_{s,b-1}}\leq C\,\|w_{1}\|_{X^{\a_{1},\b_{1}}_{s,b}}\,\|w_{2}\|_{X^{\a_{2},\b_{2}}_{s,b}}, \quad\forall\, w_1, w_2 \in \mathscr{S}(\m{R}^{2}),\]
	is equivalent to
	\be\label{weighted l2 form, nd1}
		\int \limits_{A} \frac{\xi_{3} \la \xi_{3} \ra^{s} \prod \limits_{i=1}^{3} f_{i}(\xi_{i}, \tau_{i})}{\la\xi_{1}\ra^{s} \la\xi_{2}\ra^{s} \la L_{1}\ra^{b} \la L_{2}\ra^{b} \la L_{3}\ra^{1-b}} \leq C\, \prod_{i=1}^{3} \|f_{i}\|_{L^{2}_{\xi \tau}}, \quad \forall\, f_1, f_2, f_3 \in \mathscr{S}(\m{R}^{2}),
	\ee
	where 
	\be\label{def of L}
		L_{i} = \tau_{i} - \phi^{\a_{i}, \b_{i}}(\xi_{i}), \quad i = 1,2,3.
	\ee
\end{lemma}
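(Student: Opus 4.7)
The plan is to establish this standard equivalence by three ingredients: Plancherel's theorem applied to the convolution $\widehat{w_1 w_2}=\hat w_1 *\hat w_2$, $L^2$-duality to remove the outer norm on the left-hand side, and a linear change of variables that imposes the constraint set $A$ defined in (\ref{int domain}).

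First, I would unfold the left-hand norm using Definition \ref{Def, FR space on R}: writing $\widehat{\partial_x(w_1w_2)}(\xi,\tau)=i\xi\,(\hat w_1*\hat w_2)(\xi,\tau)$, the quantity $\|\partial_x(w_1w_2)\|_{X^{\alpha_3,\beta_3}_{s,b-1}}$ equals the $L^2_{\xi\tau}$-norm of $\langle\xi\rangle^s\langle\tau-\phi^{\alpha_3,\beta_3}(\xi)\rangle^{b-1}\xi\int\hat w_1(\xi_1,\tau_1)\hat w_2(\xi-\xi_1,\tau-\tau_1)\,d\xi_1 d\tau_1$. Since the norms $\|w_i\|_{X^{\alpha_i,\beta_i}_{s,b}}$ depend only on $|\hat w_i|$, one may assume $\hat w_i\ge 0$, so absolute values can be dropped throughout and the bilinear estimate becomes a positive bilinear inequality.

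Next, I would apply $L^2$-duality $\|h\|_{L^2}=\sup_{\|g\|_{L^2}=1}\int gh$, pick a non-negative test function $g$ with $\|g\|_{L^2}=1$, and perform the substitutions
\[
f_i(\xi,\tau):=\langle\xi\rangle^s\langle\tau-\phi^{\alpha_i,\beta_i}(\xi)\rangle^b\,\hat w_i(\xi,\tau),\quad i=1,2,
\]
together with the reflection $(\xi_3,\tau_3):=(-\xi,-\tau)$ and $f_3(\xi_3,\tau_3):=g(-\xi_3,-\tau_3)$, which preserves the $L^2$-norm. Because the phase $\phi^{\alpha,\beta}(\xi)=\alpha\xi^3-\beta\xi$ is odd in $\xi$, one has $\tau-\phi^{\alpha_3,\beta_3}(\xi)=-(\tau_3-\phi^{\alpha_3,\beta_3}(\xi_3))=-L_3$, so $\langle\tau-\phi^{\alpha_3,\beta_3}(\xi)\rangle=\langle L_3\rangle$ by evenness of $\langle\cdot\rangle$, and the factor $\xi$ in front becomes $-\xi_3$. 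Setting $\xi_2:=\xi-\xi_1=-\xi_3-\xi_1$ and $\tau_2:=\tau-\tau_1=-\tau_3-\tau_1$ enforces $\sum_{i=1}^3\xi_i=\sum_{i=1}^3\tau_i=0$, which is exactly the domain $A$.

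Substituting $\hat w_i=f_i/(\langle\xi_i\rangle^s\langle L_i\rangle^b)$ for $i=1,2$ into the resulting integral and absorbing the harmless sign from $-\xi_3$ into the arbitrary non-negative $f_3$, one arrives at the trilinear form in (\ref{weighted l2 form, nd1}), with the right-hand side becoming $\prod_{i=1}^3\|f_i\|_{L^2}$. Running the substitutions in reverse produces the converse direction, so the two estimates are equivalent. The proof is almost entirely bookkeeping; the only point that requires a brief comment is the reduction to non-negative Fourier transforms, which is justified by the fact that all three norms involved (both sides) are monotone in $|\hat w_i|$ and $|g|$. No genuine obstacle arises.
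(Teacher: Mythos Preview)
Your argument is correct and is exactly the standard duality--Plancherel computation the paper has in mind; indeed the paper does not spell out a proof at all, only remarking that ``its proof follows standard techniques utilizing duality and the Plancherel theorem.'' One tiny wording slip: you speak of absorbing the sign $-\xi_3$ into an ``arbitrary non-negative $f_3$,'' which is self-contradictory; the clean way is either to observe that (\ref{weighted l2 form, nd1}) is stated for \emph{all} $f_3\in\mathscr{S}$ (so replacing $f_3$ by $-f_3$ shows $\xi_3$ and $-\xi_3$ give equivalent estimates), or to note that the derivation actually produces $|\xi_3|$, which is equivalent to $\xi_3$ for the same reason.
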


Now we are ready to carry out the proof of Proposition \ref{Prop, bilin-R}.

\subsection{Proof of (\ref{bilin-R1}) in Proposition \ref{Prop, bilin-R}}
\label{Subsec, bilin-1}

According to Lemma \ref{Lemma, bilin to weighted l2}, the bilinear estimate (\ref{bilin-R1}) is equivalent to the following estimate:
\be\label{bilin1-wl}
\int_A \frac{ \xi_3 \la \xi_3 \ra^s \prod_{i=1}^3 f_i ( \xi_i, \tau_i ) }{ \la \xi_1\ra^s \la \xi_2 \ra^s \la L_1 \ra^b \la L_2 \ra^b \la L_3\ra^{1-b} } 
\leq C \prod_{i=1}^3 \| f_i \|_{L^2}, \quad \forall f_1, f_2, f_3 \in \mathscr{S}(\m{R}^{2}),
\ee
where
\be\label{bilin1-L}
L_1 = \tau_1 - \phi^{\a, \b} (\xi_1), \quad L_2 = \tau_2 - \phi^{\a, \b} (\xi_2), \quad L_3 = \tau_3 - \xi_3^3.
\ee
It is well-known that the resonance function $H$ plays an essential role in the bilinear estimate, so we first compute $H$. For any $(\vec{\xi}, \vec{\tau}) \in A$, where $A$ is as defined in (\ref{int domain}),
\[
	H(\vec{\xi}, \vec{\tau}) = \sum_{i=1}^{3} L_i = - \big( \phi^{\a, \b} (\xi_1) +  \phi^{\a, \b} (\xi_2) + \xi_3^3 \big).
\]
When $\a = 4$, the above expression can be simplified as 
\be\label{res fun}
	H(\vec{\xi}, \vec{\tau}) = 3\xi_3 \Big[ (\xi_2 - \xi_1)^2 - \frac{\b}{3} \Big].
\ee

\subsubsection{Condition (i)}
\label{Subsubsec, bl1-bp}

We first deal with the case when $0< \b \leq 1$, $s \geq \frac12$ and $\frac12 < b \leq \frac34$. Based on (\ref{bilin1-wl}), we split the integral domain $A$ into several pieces so that the integral on each piece can be controlled easier.

\begin{itemize}
	\item Case 1: $ | \xi_3 | \ls 1$.
	
	Since $s>0$ and $\la \xi_3 \ra \leq \la \xi_1 \ra \la \xi_2 \ra$, it suffices to prove
	\[
	\int \frac{ | \xi_3 | \prod_{i=1}^3 | f_i (\xi_i, \tau_i) |}{ \la L_1\ra^b \la L_2\ra^b \la L_3 \ra^{1-b} } \leq C \prod_{i=1}^3 \| f_i \|_{L^2},
	\]
	i.e.
	\be\label{bl1-x3s}
	\iint \frac{ | f_3 | | \xi_3 |}{\la L_3 \ra^{1-b}}\bigg( \iint \frac{ | f_1  f_2 |}{\la L_1 \ra^b \la L_2\ra^b} \,d \xi_2 \, d \tau_2 \bigg) \,d \xi_3 \, d \tau_3 \leq C \prod_{i=1}^3 \| f_i \|_{L^2}.
	\ee
	
	By following the same argument as in \cite{KPV96} via the Cauchy-Schwarz inequality, it reduces to show
	\be\label{bl1-x3s-sup3-d}
	\sup _{ | \xi_3 | \ls 1, \tau_3 \in \m{R} } \frac{ | \xi_3 |^2}{\la L_3 \ra^{2 - 2b}} \iint \frac{ d \tau_2 \,d \xi_2}{\la L_1 \ra^{2b} \la L_2 \ra^{2 b}} \leq C.
	\ee
	Since
	$$
	L_1 = \tau_1 - \phi^{\a, \b} (\xi_1) = - \tau_2 - \tau_3 - \phi^{\a, \b}(- \xi_2 - \xi_3)
	$$
	and $ L_2 = \tau_2 - \phi^{\a, \b} (\xi_2) $, 
	it then follows from Lemma \ref{Lemma, int in tau} that
	\[
	\int \frac{d \tau_2}{\la L_1 \ra^{2b} \la L_2 \ra^{2b}} \leq \frac{C}{\la L_1 + L_2 \ra^{2b}}.
	\]
	
	Thus, (\ref{bl1-x3s-sup3-d}) boils down to
	\be\label{bl1-x3s-sup3}
	\sup _{ | \xi_3 | \ls 1, \tau_3 \in \m{R}} \frac{| \xi_3 |^2}{\la L_3 \ra^{2 - 2b}} \int \frac{d \xi_2}{\la L_1 + L_2 \ra^{2b}} \leq C.
	\ee
	By direct computation, for fixed $\xi_3$ and $\tau_3$, $ L_1 + L_2 $ can be viewed as a quadratic function of $\xi_2$. More precisely, when $\a = 4$,
	\be\label{L1aL2}
	L_1 + L_2 = 12 \xi_3 \xi_2^2 + 12 \xi_3^2 \xi_2 + 4 \xi_3^3 - \b \xi_3 - \tau_3 \stackrel{\Delta}{=} P_{\xi_3, \tau_3}(\xi_2).
	\ee
	Hence, we can take advantage of Lemma \ref{Lemma, int for quad} to conclude that
	$$
	\int \frac{d \xi_2}{\la L_1 + L_2 \ra^{2b}} \leq C | \xi_3 |^{ -\frac12 }.
	$$
	Then
	$$
	\text{ LHS of } ( \ref{bl1-x3s-sup3} ) \ls \sup _{ | \xi_3 | \ls 1, \tau_3 \in \m{R} } \frac{ | \xi_3 |^{\frac32} }{\la L_3 \ra^{2 - 2b}} \leq C.
	$$
	
	\item Case 2: $| \xi_3 | \gg 1$ and $ | \xi_2 - \xi_1 | \geq \frac{1}{10} | \xi_3 |$.
	
	Similar to Case 1, it reduces to show
	\be\label{bl1-x3l-dl-sup3}
		\sup _{|\xi_3| \gg 1, \tau_3\in\m{R}} \frac{ | \xi_3 |^2}{\la L_3 \ra^{2 - 2b}} \int \frac{d \xi_2}{\la L_1 + L_2 \ra^{2b} } \leq C.
	\ee
	Recalling that $ L_1 + L_2 = P_{\xi_3, \tau_3} (\xi_2)$ as in (\ref{L1aL2}), so
	\[
		P_{\xi_3, \tau_3}^{\prime} (\xi_2) = 24 \xi_3 \xi_2 + 12 \xi_3^2 = 12 \xi_3 (2 \xi_2 + \xi_3) = 12 \xi_3 (\xi_2 - \xi_1).
	\]
	Thanks to the assumption $|\xi_2 - \xi_1| \geq | \xi_3 | / 10$, we know
	$ | P_{\xi_3, \tau_3}^{\prime} (\xi_2) | \gs | \xi_3 |^2 $. Consequently,
	\[
	\text { LHS of } (\ref{bl1-x3l-dl-sup3}) \leq \sup _{|\xi_3| \gg 1, \tau_3 \in \m{R} } |\xi_3|^2 \int \frac{1}{|\xi_3|^2} \frac{|P_{\xi_3, \tau_3}^{\prime} (\xi_2)|}{\la P_{\xi_3, \tau_3}(\xi_2) \ra^{2b} } \,d \xi_2 \leq C.
	\]
	
	\item Case 3: $ |\xi_3| \gg 1$ and $|\xi_2 - \xi_1| < \frac{1}{10} | \xi_3 |$.
	
	In this case, it is not difficult to find that $| \xi_1 | \sim |\xi_2| \sim |\xi_3|$, so in order to justify (\ref{bilin1-wl}), it suffices to show
	\be\label{bl1-wl-as}
	\int \frac{ |\xi_3|^{1-s} \prod_{i=1}^3 |f_i|}{\la L_1\ra^b \la L_2\ra^b \la L_3 \ra^{1-b} } \leq C \prod_{i=1}^3 \| f_i \|_{L^2}.
	\ee
	Analogous to Case 1, it boils down to verify
	\be\label{bl1-x3l-ds-sup3}
		\sup _{ | \xi_3 | \gg 1, \tau_3 \in \m{R} } \frac{ |\xi_3|^{2 - 2s}}{\la L_3 \ra^{2 - 2b} } \int \frac{1}{\la L_1 + L_2\ra^{2b} } \,d \xi_2 \leq C.
	\ee
	Based on (\ref{L1aL2}),
	$$
	L_1 + L_2 = P_{\xi_3, \tau_3} (\xi_2) = \sigma_2 \xi_2^2 + \sigma_1 \xi_1 + \sigma_0,
	$$
	where
	$$
	\sigma_0 = 4 \xi_3^3 - \b \xi_3 - \tau_3, \quad \sigma_1 = 12 \xi_3^2, \quad \sigma_2 = 12 \xi_3.
	$$
	According to Lemma \ref{Lemma, int for quad},
	$$
	\begin{aligned}
		\int \frac{1}{\la L_1 + L_2 \ra^{2b} } \,d \xi_2 & \leq C |\sigma_2|^{-\frac12} \Big\la \sigma_0 - \frac{\sigma_1^2}{4 \sigma_2}\Big\ra^{-\frac12} \\
		& = C |\xi_3|^{-\frac12}\la \xi_3^3 - \b \xi_3 - \tau_3\ra^{-\frac12}.
	\end{aligned}
	$$
	Thus,
	\be\label{bl1-x3l-ds-qe}
	\text{ LHS of } (\ref{bl1-x3l-ds-sup3}) \leq C \sup _{|\xi_3| \gg 1, \tau_3 \in \m{R} } \frac{ |\xi_3|^{\frac32 - 2s} }{\la L_3\ra^{2 - 2b} \la \xi_3^3 - \b \xi_3 - \tau_3 \ra^{1/2} }.
	\ee
	Since $s \geq\frac12$ and $b \leq \frac34$,
	\be\label{bl1-x3l-ds-de}
	\text{ RHS of } (\ref{bl1-x3l-ds-qe}) \leq C \sup _{ |\xi_3| \gg 1, \tau_3 \in \m{R} } \frac{|\xi_3|^{\frac12} }{\big[ \la L_3 \ra \la \xi_3^3 - \b \xi_3 - \tau_3 \ra \big]^{1/2}}.
	\ee
	Noticing that
	$$
		\la L_3 \ra \la \xi_3^3 - \b \xi_3 - \tau_3\ra \geq \la L_3 + \xi_3^3 - \b \xi_3 - \tau_3 \ra = \la \b \xi_3 \ra,
	$$
	so
	$$
	\text { RHS of }(\ref{bl1-x3l-ds-de}) \leq C.
	$$
\end{itemize}

Hence, we finished the proof for the bilinear estimate (\ref{bilin-R1}) under condition (i) in Proposition \ref{Prop, bilin-R}.

\subsubsection{Condition (ii)}
\label{Subsubsec, bl1-bn}

Now we continue to investigate the case when $-1 \leq \b < 0$, $s > \frac14$ and $\frac12 < b \leq s + \frac14$. Again based (2.16), we split the region A into three pieces in the same way as above.
For Case 1: $ |\xi_3| \ls 1$, and Case 2: $|\xi_3| \gg 1$ and $|\xi_2 - \xi_1| \geq \frac{1}{10} |\xi_3|$,
the argument is exactly the same as those in Section \ref{Subsubsec, bl1-bp}.

Next, we focus on Case 3: $|\xi_3| \gg 1$ and $|\xi_2 - \xi_1| < \frac{1}{10} |\xi_3|$. 
In this case, $ |\xi_1| \sim |\xi_2| \sim |\xi_3|$ and it suffices to verify (\ref{bl1-wl-as}), i.e.
\be\label{bl1-bn-wl}
	\int \frac{ |\xi_3|^{1-s} \prod_{i=1}^3 |f_i|}{\la L_1 \ra^b \la L_2\ra^b \la L_3\ra^{1-b} } \leq C \prod_{i=1}^3 \| f_i \|_{L^2}, \quad \forall f_1, f_2, f_3 \in \mathscr{S} (\m{R}^2).
\ee
Since $-1 \leq \b<0$, there exists $\b_2>0$ such that 
$\b_2^2 = -\frac{\b}{3}$.  
As a result, it follows from (\ref{res fun}) that
\be\label{bl1-bn-Hlb}
	H(\vec{\xi}, \vec{\tau}) = 3 \xi_3 \big[ (\xi_2 - \xi_1)^2 + \b_2^2 \big] \gs |\xi_3|.
\ee
Since $H = \sum_{i=1}^3 L_i$, we know at least one of $\left\{\la L_i\ra \right\}_{1 \leq i \leq 3} $ is comparable to $\la H \ra$. Denote
$$
	\text{MAX} = \max \left\{ \la L_1\ra, \la L_2\ra, \la L_3\ra \right\}.
$$
Then $\text{MAX} \gs \la H \ra \gs |\xi_3|$. 
Next, we will further divide Case 3 into three subcases depending on which $\la L_i\ra$ equals $\text{MAX}$.

\begin{itemize}
	\item Case 3.1: $\la L_3\ra = \text{MAX}$. 
	
	In this case, $\la L_3\ra \gs |H| \gs |\xi_3|$. Then according to (\ref{bl1-wl-as}), it suffices to prove
	\be\label{bl1-bn-x3l-ds-sup3}
	\sup _{ |\xi_3| \gg 1, \tau_3 \in \m{R} } \frac{ |\xi_3|^{2 - 2s}}{ |\xi_3|^{2 - 2b} } \int \frac{1}{\la L_1 + L_2 \ra^{2b}} \,d \xi_2 \ls 1. 
	\ee
	Recalling
	$$
		L_1 + L_2 = P_{\xi_3, \tau_3} (\xi_2) = 12 \xi_3 \xi_2^2 + 12 \xi_3^2 \xi_2 + (4 \xi_3^3 - \b \xi_3 - \tau_3).
	$$
	
	Then we can apply Lemma \ref{Lemma, int for quad} to obtain
	$$
		\int \frac{1}{\la L_1 + L_2\ra^{2b} } \,d \xi_2 \ls | \xi_3 |^{-\frac12}.
	$$
	Hence,
	$$\begin{aligned}
		\text { LHS of } (\ref{bl1-bn-x3l-ds-sup3}) 
		&\ls \sup _{ |\xi_3| \gg 1, \tau_3 \in \m{R} } \frac{ |\xi_3 |^{2 - 2s} }{ |\xi_3|^{2 - 2b} } \cdot |\xi_3|^{-\frac12} \\
		& = \sup _{ |\xi_3| \gg 1, \tau_3 \in \m{R} } |\xi_3|^{2b - 2s - \frac12} \ls 1,
	\end{aligned}$$
	where the last inequality is due to the assumption that 
	$b \leq s + \frac14$. Here, we emphasize that $s$ is required to be greater than $1/4$  since $b > \frac12$.
	
	\item Case 3.2: $\la L_1\ra = \text{MAX}$.
	
	In this case, $\la L_1\ra \geq \la L_3\ra$, so
	$$
		\frac{1}{\la L_1 \ra^b \la L_3\ra^{1-b}} \leq \frac{1}{\la L_1\ra^{1-b} \la L_3\ra^b}.
	$$
	Recalling $ |\xi_1| \sim |\xi_3|$, so $ (\ref{bl1-bn-wl}) $ is reduce to
	$$
		\int \frac{ |\xi_1|^{1-s} \prod_{i=1}^3 |f_i|}{\la L_1\ra^{1-b} \la L_2\ra^b \la L_3\ra^b} \leq C \prod_{i=1}^3 \|f_i\|_{L^2}.
	$$
	Then similar to the derivation of (\ref{bl1-x3l-ds-sup3}), it then suffices to prove
	\be\label{bl1-bn-x3l-ds-sup1}
		\sup _{|\xi_1| \gg 1 , \tau_1 \in \m{R}} \frac{ |\xi_1|^{2 - 2s} }{\la L_1\ra^{2 - 2b} } \int \frac{d \xi_2}{\la L_2 + L_3 \ra^{2b} } \leq C.
	\ee
	
	For fixed $\xi_1$ and $\tau_1$, $L_2 + L_3$ can be viewed as a cubic function of $\xi_2$. More specifically,
	$L_2 + L_3 = -3 P_{\xi_1, \tau_1}(\xi_2)$, where
	\[
		P_{\xi_1, \tau_1}(\xi_2) = \xi_2^3 - \xi_1 \xi_2^2 - (\xi_1^2 + \b/3) \xi_2 - \frac{\xi_1^3 - \tau_1}{3}.
	\]
	So it follows from Lemma \ref{Lemma, int for cubic} that
	$$
		\int \frac{d \xi_2}{ \la L_2 + L_3\ra^{2b} } \leq C \big \la -3(\xi_1^2 + \b/3) - \xi_1^2\big\ra^{-\frac14} \sim |\xi_1|^{-\frac12}.
	$$
	
	In addition, thanks to the assumption $\la L_1\ra = \text{MAX}$, we know
	$$
		\la L_1 \ra \gs |H| \gs |\xi_3| \sim |\xi_1|.
	$$
	Consequently,
	$$\begin{aligned}
		\text { LHS of } (\ref{bl1-bn-x3l-ds-sup1}) 
		&\ls \sup _{ |\xi_1| \gg 1, \tau_1 \in \m{R} } \frac{ |\xi_1 |^{2 - 2s} }{ |\xi_1|^{2 - 2b} } \cdot |\xi_1|^{-\frac12} \\
		& = \sup _{ |\xi_1| \gg 1, \tau_1 \in \m{R} } |\xi_1|^{2b - 2s - \frac12} \ls 1,
	\end{aligned}$$
	where the last inequality is again due to the assumption that 
	$b \leq s + \frac14$.

	\item Case 3.3: $\la L_2 \ra = \text{MAX}$.
	
	It is easily seen that $(\xi_1, \tau_1)$ and $(\xi_2, \tau_2)$ are symmetric in (\ref{bl1-bn-wl}), so Case 3.3 can be handled in the same way as Case 3.2 by just switching $(\xi_1, \tau_1)$ and $(\xi_2, \tau_2)$.
\end{itemize}

Therefore, we finished the proof for the bilinear estimate (\ref{bilin-R1}) under condition (ii) in Proposition \ref{Prop, bilin-R}.

\subsection{Proof of (\ref{bilin-R2}) in Proposition \ref{Prop, bilin-R}}
\label{Subsec, bilin-2}

Although the justification of (\ref{bilin-R2}) is more challenging than that of (\ref{bilin-R1}) since $(\xi_1, \tau_1)$ and $(\xi_2, \tau_2)$ are not symmetric in (\ref{bilin-R2}), the essential idea and the general framework for these two bilinear estimates are still in the same spirit. So in the following, we will only sketch the key steps in the verification of (\ref{bilin-R2}).

Based on Lemma \ref{Lemma, bilin to weighted l2}, the bilinear estimate (\ref{bilin-R2}) is equivalent to the following estimate:
\be\label{bilin2-wl}
	\int_A \frac{ \xi_3 \la \xi_3 \ra^s \prod_{i=1}^3 f_i ( \xi_i, \tau_i ) }{ \la \xi_1\ra^s \la \xi_2 \ra^s \la L_1 \ra^b \la L_2 \ra^b \la L_3\ra^{1-b} } 
	\leq C \prod_{i=1}^3 \| f_i \|_{L^2}, \quad \forall f_1, f_2, f_3 \in \mathscr{S}(\m{R}^{2}),
\ee
where
\be\label{bilin2-L}
L_1 = \tau_1 - \xi_1^3, \quad L_2 = \tau_2 - \phi^{\a, \b} (\xi_2), \quad L_3 = \tau_3 - \phi^{\a, \b} (\xi_3).
\ee
This time, the resonance function $H$ becomes 

\be\label{res fun-2}
	H(\vec{\xi}, \vec{\tau}) = \sum_{i=1}^{3} L_i = 3\xi_1 \Big[ (\xi_2 - \xi_3)^2 - \frac{\b}{3} \Big], \quad \forall\, (\vec{\xi}, \vec{\tau}) \in A.
\ee

\subsubsection{Condition (i)}
\label{Subsubsec, bl2-bp}

We first deal with the case when $0< \b \leq 1$, $s \geq \frac12$ and $\frac12 < b \leq \frac34$. Similar to the proof in Section \ref{Subsubsec, bl1-bp}, we split the integral domain $A$ in (\ref{int domain}) into several pieces.

\begin{itemize}
	
	\item Case 1: $\|\xi_3| \ls 1$.
	
	Recalling Case 1 in Section \ref{Subsubsec, bl1-bp}, the essential step is the estimate of the integral 
	\[ 
		\int \frac{d \xi_2}{\la  L_1 + L_2 \ra^{2b} } 
	\] 
	in (\ref{bl1-x3s-sup3}).
	For the newly defined $L_1$ and $L_2$ in (\ref{bilin2-L}), we can regard $L_1 + L_2$ as a cubic function of $\xi_2$ for fixed $\xi_3$ and $\tau_3$. More precisely,
	$L_1 + L_2 = -3 P_{\xi_3, \tau_3}(\xi_2)$, where
	\be\label{L1aL2-2}
		P_{\xi_3, \tau_3}(\xi_2) = \xi_2^3 - \xi_3 \xi_2^2 - \Big(\xi_3^2 + \frac{\beta}{3} \Big) \xi_2 + \frac{\tau_3 - \xi_3^3}{3}.
	\ee
	Then it follows from Lemma \ref{Lemma, int for cubic} that
	\[
		\int \frac{d \xi_2}{\la L_1 + L_2\ra^{2b} } \ls 1.
	\]
	The rest argument can be carried out similarly as that in Case 1 in Section \ref{Subsubsec, bl1-bp}.
	
	Next, by noticing that the key ingredient in Case 2 in Section \ref{Subsubsec, bl1-bp} is to take advantage of the large size of $|P_{\xi_3, \tau_3}^{\prime}(\xi_2)|$. Now according to (\ref{L1aL2-2}),
	\be\label{bl2-L1aL2d}
	\begin{aligned}
		P_{\xi_3, \tau_3}^{\prime} (\xi_2) & = 3 \xi_2^2 - 2 \xi_3 \xi_2 - \Big(\xi_3^2 + \frac{\beta}{3} \Big) \\
		& = (\xi_2 - \xi_3)(3 \xi_2 + \xi_3) - \beta/3.
	\end{aligned}
	\ee
	This motivates us to introduce Case 2 below.

	\item Case 2: $ |\xi_3| \gg 1$, $|\xi_2 - \xi_3| \geq \frac{1}{10} |\xi_3|$ and $ |3 \xi_2 + \xi_3| \geq \frac{1}{10} |\xi_3|$.
	
	In this case, it is easily seen from (\ref{bl2-L1aL2d}) that
	$$
		\Big| P_{\xi_3, \tau_3}^{\prime} (\xi_2) \Big| \gs |\xi_3|^2 \gg 1.
	$$
	Then we can repeat the process in Case 2 in Section \ref{Subsubsec, bl1-bp} to finish this part.
	
	\item Case 3: $ |\xi_3| \gg 1$,  $|\xi_2 - \xi_3| < \frac{1}{10} |\xi_3|$ or $|3 \xi_2 + \xi_3| < \frac{1}{10} |\xi_3|$.
	
	In this case, we see that $ |\xi_1| \sim |\xi_2| \sim |\xi_3|$, so it suffices to prove
	\be\label{bl2-wl-as}
		\int \frac{ |\xi_3|^{1-s} \prod_{i=1}^3 f_i (\xi_i, \tau_i) }{\la L_1\ra^b \la L_2\ra^b \la L_3\ra^{1-b} } \leq C \prod_{i=1}^3 \|f_i\|_{L^2}, \quad \forall f_1,\, f_2,\, f_3 \in \mathscr{S}(\m{R}^2).
	\ee
	Unlike Case 3 in Section \ref{Subsubsec, bl1-bp}, the polynomial $L_1 + L_2 = P_{\xi_{3}, \tau_3}(\xi_2)$ now is of cubic degree as in (\ref{L1aL2-2}) rather than a quadratic degree as in (\ref{L1aL2}). Meanwhile, the estimate in Lemma \ref{Lemma, int for cubic} for cubic polynomials are not as strong as that for quadratic polynomials in Lemma \ref{Lemma, int for quad}, so the proof for Case 3 in Section \ref{Subsubsec, bl1-bp} does not work here.
	
	Fortunately, we can still adopt the approach in Case 3 in Section \ref{Subsubsec, bl1-bn} by taking advantage of the resonance function $H$ with suitable modifications. In Section \ref{Subsubsec, bl1-bn}, $\beta$ is negative so that $|H|$ automatically possesses a good lower bound $|\xi_3|$ in (\ref{bl1-bn-Hlb}), however, the resonance function $H$ in (\ref{res fun-2}) may be close to 0 since $\beta$ is positive in the current situation. Next, we will investigate when $H$ in (\ref{res fun-2}) is large.
	
	Since $\b$ is positive, there exists $\b_1>0$ such that 
	\be\label{beta1}
		\b_1^2 = \b / 3.
	\ee
	Then it follows from (\ref{res fun-2}) that
	\be\label{res-fun-2bp}
	\begin{aligned}
		H & = 3 \xi_1 \big[ (\xi_2 - \xi_3)^2 - \b_1^2 \big] \\
		& = 3 \xi_1 (\xi_2 - \xi_3 + \b_1)(\xi_2 - \xi_3 - \b_1).
	\end{aligned}
	\ee
	This suggests to split Case 3 into two subcases.
	
	\begin{itemize}
		\item Case 3.1: $| \xi_2 - \xi_3 + \b_1 | \leq \b_1 / 2$ or $ |\xi_2 - \xi_3 - \b_1 | \leq \b_1 / 2$.
		
		In this case, $H$ may be small according to (\ref{res-fun-2bp}), so we turn to look for a large lower bound for $ | P_{\xi_3, \tau_3}^{\prime}(\xi_2)|$ as in Case 2.
		In fact, we can infer from the assumption in Case 3.1 that
		$$
			|\xi_2 - \xi_3| \sim \b_1 \sim 1 \text { and } |3\xi_2 + \xi_3| \sim |\xi_3| \gg 1.
		$$
		Hence, it follows from (\ref{bl2-L1aL2d}) that
		$\big| P_{\xi_3, \tau_3}^{\prime} (\xi_2) \big| \gs |\xi_3|$.
		By taking advantage of this lower bound and the assumption $s \geq 1/2$, we can justify (2.37) in the similar manner as that in Case 2.
		
		\item Case 3.2: $| \xi_2 - \xi_3 + \b_1 | > \b_1 / 2$ and $|\xi_2 - \xi_3 - \b_1| > \b_1 / 2$. 
		
		In this case, it follows from (\ref{res-fun-2bp}) that
		\be\label{bl2-bp-Hlb}
			|H| \gs |\xi_1|.
		\ee
		Recalling the fact that $|\xi_1| \sim | \xi_2| \sim |\xi_3|$, so the lower bound (\ref{bl2-bp-Hlb}) is comparable to that in (\ref{bl1-bn-Hlb}) in Section \ref{Subsubsec, bl1-bn}. Then we can follow the argument in Case 3 in Section \ref{Subsubsec, bl1-bn} to justify (\ref{bl2-wl-as}) by considering further three subcases depending on which $\la L_i \ra$ is the maximum of $\big\{ \la L_1\ra, \, \la L_2\ra, \, \la L_3 \ra \big\}$.
	\end{itemize}
	We remark that it is required that $s \geq \frac12$ and $\frac12 < b \leq \frac34$ in Case 3.1 while it is only required $s > \frac14$ and $\frac12 < b \leq s + \frac14$ in Case 3.2.
\end{itemize}

Hence, we finished the proof for the bilinear estimate (\ref{bilin-R2}) under condition (i) in Proposition \ref{Prop, bilin-R}.

\subsubsection{Condition (ii)}
\label{Subsubsec, bl2-bn}

Now we continue to investigate the case when $-1 \leq \b < 0$, $s > \frac14$ and $\frac12 < b \leq s + \frac14$. The proof for this case can follow almost the same procedure as that in Section \ref{Subsubsec, bl2-bp} and actually be even simpler. 

More specifically, for Case 1 and Case 2 in Section \ref{Subsubsec, bl2-bp}, the arguments are exactly the same. For Case 3, the resonance function $H$ has better property. In fact, since $\b < 0$ in the current case, there exists some $\b_2 > 0$ such that 
\be\label{beta2}
	\b_2^2 = - \b / 3.
\ee
As a result, it follows from (\ref{res fun-2}) that 
\be\label{res-fun-2bn}
	H = 3 \xi_1 \big[ (\xi_2 - \xi_3)^2 + \b_2^2\big].
\ee
Thanks to (\ref{res-fun-2bn}) which ensures a lower bound $|\xi_1|$ for $|H|$, we can follow the argument in Case 3.2 in Section \ref{Subsubsec, bl2-bp} to justify (\ref{bl2-wl-as}) under the conditions $s > \frac14$ and $\frac12 < b \leq s + \frac14$.

Hence, we finished the proof for the bilinear estimate (\ref{bilin-R2}) under condition (ii) in Proposition \ref{Prop, bilin-R}.

\section{Proof of Theorem \ref{Thm, main_R}}
\label{Sec, pf-main}

For Theorem \ref{Thm, main_R}, we can take advantage of the bilinear estimates in Proposition \ref{Prop, bilin-R} and some standard argument (see e.g. \cite{KPV96, Oh09, YZ22a}) to justify the analytically LWP of (\ref{m-MB}) in $\mathscr{H}^{s}(\m{R})$ for $s \geq \frac12$ in part (a) and for $s > \frac14 $ in part (b). Next, we focus on the failure of $C^2$ LWP for part (a) when $s < \frac12$ and the failure of $C^3$ LWP for part (b) when $s < \frac14$. Based on \cite{Bou97, Oh09}, we first introduce the general framework.

We first fix a notation. Consider the Cauchy problem of the following linear KdV equation with $a, b \in \m{R}$ and $a \neq 0$.
\be\label{lKdV}
	\begin{cases}
		w_t + a w_{xxx} + b w_x = 0, \\ 
		w(x, 0) = w_0(x). 
	\end{cases}
\ee
For any $s \in \m{R}$ and $w_0 \in H^s(\m{R})$, it is well-known that (\ref{lKdV}) admits a unique solution $w \in C_b \big(\m{R}; H^{s}(\m{R})\big)$.

\begin{definition}\label{Def, KdVop}
Let $a \neq 0$, $b \in \m{R}$, $s \in \m{R}$ and $w_0 \in H^s(\m{R})$. We denote the unique solution $w$ of (\ref{lKdV}) to be 
$w(x, t) = \big[S_{a, b}(t) w_0\big](x)$ 
or simply 
$w(t) = S_{a, b}(t) w_0$.
In particular, when $a = 1$ and $b = 0$, we denote $S_{1,0}$ to be $S$ for short.
\end{definition}
Based on  (\ref{lKdV}) and Definition \ref{Def, KdVop}, it is straightforward to see that
\be\label{semiop}
	\mathscr{F}_{x}\big[ S_{a,b}(t) w_0 \big](\xi) 
	=  e^{i(a\xi^3 - b\xi )t} \wh{w}_0(\xi) 
	= e^{ i \phi^{a,b}(\xi) t} \wh{w}_0(\xi),
\ee
where $\mathcal{F}_{x}$ means the Fourier transform in the $x$ variable. 

Now we come hack to (\ref{m-MB}) with $\a = 4$ and $\b \in \m{R}$.
Recall that when we say (\ref{m-MB}) is locally well-posed with a $C^k(k \geq 1)$ solution map, it means that there exists $t>0$ such that the map from the initial data $(u_0, v_0) \in \mathscr{H}^s(\m{R})$ to the local solution $(u, v) \in C \big( [0, T]; \mathscr{H}^s(\m{R}) \big)$ is $C^k$.
According to \cite{Bou97, Oh09}, we take $(u_0, v_0) = (\delta \phi, \delta \psi)$ so that (\ref{m-MB}) becomes
\be\label{m-MB-ip}
	\left\{\begin{array}{l}
		u_t + u_{xxx} + v v_x = 0, \\
		v_t + \a v_{xxx} + \b v_x + (uv)_x = 0, \\
		\big( u(x, 0), v(x, 0) \big) = \big( \delta \phi(x), \delta \psi(x) \big),
	\end{array}\right.
\ee
where $\delta \geq 0$ and $(\phi, \psi) \in \mathscr{H}^s(\m{R})$.
We denote the solution of (\ref{m-MB-ip}) to be $\big( u(x, t, \delta), v(x, t, \delta) \big)$ or simply $\big( u(t, \delta), v(t, \delta) \big)$. Then it follows from the Duhamel's principle that
\be\label{Duh-soln}
	\left\{\begin{aligned}
		& u(t,\delta) = \delta S(t)\phi - \frac12 \int_{0}^{t} S(t-\tau) \p_{x}(v^2)(\tau) \,d\tau, \\
		& v(t,\delta) = \delta S_{\a,\b}(t)\psi - \int_{0}^{t} S_{\a,\b}(t-\tau) \p_{x}(uv)(\tau) \,d\tau.
	\end{aligned}\right.
\ee

When $\delta = 0$, the initial data in (\ref{m-MB-ip}) is 0 and the unique solution is also 0, which means $\big( u(t, 0), v(t, 0) \big) = (0,0)$. Then by taking derivative in $\delta$ at $0$, it follows from (\ref{Duh-soln}) that
\be\label{1st-deri}
	\left\{\begin{aligned}
		& \p_\delta u(t, 0) = S(t) \phi \triangleq \phi_1(t), \\
		& \p_\delta v(t, 0) = S_{\a, \b}(t) \psi \triangleq \psi_1(t).
	\end{aligned}\right.
\ee
By taking the second and third derivatives in $\delta$ at $0$, we have

\be\label{2nd-deri}
	\left\{\begin{aligned}
		&\p_\delta^2 u(t, 0) = - \int_0^t S(t - \tau) \p_x ( \psi_1^2 )(\tau) \,d \tau \triangleq \phi_2(t), \\ 
		&\p_\delta^2 v(t, 0) = -2 \int_0^t S_{\a, \b}(t - \tau) \p_x (\phi_1 \psi_1)(\tau) \,d \tau \triangleq \psi_2(t).
	\end{aligned}\right.
\ee
and
\be\label{3rd-deri}
	\left\{\begin{aligned}
		&\p_\delta^3 u(t, 0) = -3 \int_0^t S(t - \tau) \p_x ( \psi_1 \psi_2 )(\tau) \,d \tau \triangleq \phi_3(t), \\ 
		&\p_\delta^3 v(t, 0) = -3 \int_0^t S_{\a, \b}(t - \tau) \p_x (\phi_1 \psi_2 + \phi_2 \psi_1)(\tau) \,d \tau \triangleq \psi_3(t).
	\end{aligned}\right.
\ee

Note that if the solution map is $C^2$, then there exists $T_1 > 0$ such that
\be\label{2nd-derib}
	\sup _{0 \leq t \leq T_1} \big\| (\phi_2, \psi_2)(\cdot, t) \big\|_{ \mathscr{H}_x^s( \m{R} ) } \leq C \| (\phi, \psi) \|_{\mathscr{H}^s( \m{R} ) }^2.
\ee
Similarly, if the solution map is $C^3$, then there exists $T_2>0$ such that
\be\label{3rd-derib}
	\sup _{0 \leq t \leq T_2}\big\| ( \phi_3, \psi_3 )(\cdot, t) \big\|_{\mathscr{H}_x^s( \m{R} ) } \leq C \|(\phi, \psi)\|_{\mathscr{H}^s( \m{R} ) }^3.
\ee


\subsection{Part (a)}
\label{Subsec, pf of Thm-a}

First, we infer from (\ref{1st-deri}) and (\ref{semiop}) that 
\be\label{1stF}
	\F_x \phi_1(\xi,t) = e^{i\xi^3 t} \wh{\phi}(\xi), \quad \F_x \psi_1(\xi,t) = e^{i \phi^{\a,\b}(\xi) t} \wh{\psi}(\xi),
\ee
where $\phi$ and $\psi$ will be determined later. Next, according to (\ref{2nd-deri}) and (\ref{semiop}), we have 
\[
	\F_{x}\psi_2(\xi,t) = -2 \int_{0}^{t} e^{i \phi^{\a,\b}(\xi)(t-\tau)} \xi \int_{\m{R}} \F_{x}\phi_1(\xi_1, \tau) \F_{x}\psi_1(\xi - \xi_1, \tau) \,d\xi_1 \,d\tau.
\]
Plugging (\ref{1stF}) into the above equation yields 
\[
	\begin{aligned}
		\F_{x} \psi_2(\xi, t) & = -2 \xi e^{i \phi^{\a, \b}(\xi) t} \int_{0}^{t} e^{-i \phi^{\a,\b}(\xi)\tau} \int_{\m{R}} e^{i\xi_1^3 \tau} \wh{\phi}(\xi_1) e^{i \phi^{\a,\b}(\xi - \xi_1) \tau} \wh{\psi}(\xi - \xi_1) \,d\xi_1 \, d\tau \\
		& = -2 \xi e^{i \phi^{\a, \b}(\xi) t} \int_{\m{R}} \wh{\phi}(\xi_1) \wh{\psi}(\xi - \xi_1) \bigg(\int_{0}^{t} e^{i G_0(\xi_1, \xi - \xi_1, -\xi) \tau} \,d\tau \bigg) \,d\xi_1,
	\end{aligned}
\]
where 
\be\label{G0}
	G_0(\eta_1, \eta_2, \eta_3) \triangleq \eta_1^3 + \phi^{\a,\b}(\eta_2) + \phi^{\a,\b}(\eta_3), \quad \forall\, \sum_{i=1}^{3} \eta_i = 0.
\ee
After integrating with respect to $\tau$,
\be\label{2ndF}
	\F_{x} \psi_2(\xi, t) = 2 i \xi e^{i \phi^{\a, \b}(\xi) t} \int_{\m{R}} \wh{\phi}(\xi_1) \wh{\psi}(\xi - \xi_1) \frac{e^{i G_0(\xi_1, \xi - \xi_1, -\xi) t} - 1}{G_0(\xi_1, \xi - \xi_1, -\xi)} \,d\xi_1.
\ee

Let $N$ be any large positive integer such that $N \gg 1 + \b_1$, where $\b_1 = \sqrt{\b / 3}$ as defined in (\ref{beta1}). Then we choose $\phi$ and $\psi$ such that 
\be\label{initF}
\left\{\begin{aligned} 
	& \wh{\phi}(\xi) = \g^{-\frac12} N^{-s} \, \m{I}_{[0,\g]} (\xi - 2N - \b_1), \\
	& \wh{\psi}(\xi) = \g^{-\frac12} N^{-s} \, \m{I}_{[\g, 2\g]} (\xi + N),
\end{aligned}\right.
\ee
where $\g = N^{-1}$ and $\m{I}$ represents the indicator function. Then it is easily seen that 
\be\label{init-n}
\| \phi \|_{H^s(\m{R})} \sim \| \psi \|_{H^s(\m{R})} \sim 1.	
\ee
For fixed $\xi$, we denote
\be\label{intd}
	A_{\xi} = \big\{ \xi_1\in\m{R}: 0\leq \xi_1 - 2N -\b_1 \leq \g,\, \g \leq \xi - \xi_1 + N \leq 2\g \big\}.
\ee
Meanwhile, by direct computation and using the fact that $\a = 4$ and $\b_1^2 = \b / 3$, we find
\be\label{res-f1}
	G_0(\xi_1, \xi - \xi_1, -\xi) = -3 \xi_1 \big[ (2\xi - \xi_1)^2 - \b_1^2 \big].
\ee
Then it follows from (\ref{2ndF}) that
\be\label{2ndFs}
	\F_{x} \psi_2(\xi,t) = 2 i \g^{-1} N^{-2s} \xi e^{i \phi^{\a,\b}(\xi)t} \int_{A_\xi} \frac{e^{ -3 i \xi_1 [ (2\xi - \xi_1)^2 - \b_1^2 ]t } -1 }{-3 \xi_1 [ (2\xi - \xi_1)^2 - \b_1^2 ]} \, d\xi_1.
\ee

Next, we focus on estimating the size of $|\F_{x} \psi_2(\xi,t)|$ when $0 < t \ll 1$ and $\xi \in E_1$, where 
\be\label{effrange}
	E_1 \triangleq \Big[ N + \b_1 + \frac74 \g, \, N + \b_1 + \frac94 \g \Big].
\ee
When $\xi \in E_1$ and $\xi_1 \in A_\xi$, it follows from (\ref{intd}) and (\ref{effrange}) that 
\[ 
	\xi_1 \sim N \quad \text{and} \quad 2\g \leq 2\xi - \xi_1 - \b_1 \leq 5\g, 
\]
which implies that 
\be\label{res-size1}  
	\xi_1 \big[ (2\xi - \xi_1)^2 - \b_1^2 \big] \sim N\g \sim 1.
\ee
As a result, there exists some absolute constant $t_1$ (independent of $N$) such that for any $0 < t \leq t_1$,
\be\label{large-im}
	\text{Im}\bigg( \frac{e^{ -3 i \xi_1 [ (2\xi - \xi_1)^2 - \b_1^2 ]t } -1 }{-3 \xi_1 [ (2\xi - \xi_1)^2 - \b_1^2 ]} \bigg) 
	= \frac{\sin\big( 3 \xi_1 [ (2\xi - \xi_1)^2 - \b_1^2 ] t \big) }
	{3 \xi_1 [ (2\xi - \xi_1)^2 - \b_1^2 ]} \geq \frac{t}{2}.
\ee
Taking advantage of (\ref{large-im}), it follows from (\ref{2ndFs}) that for any $\xi \in E_1$, 
\[  
	\big| \F_{x}\psi_2(\xi,t) \big| \geq \g^{-1} N^{-2s} \xi t |A_\xi|,
\]
where $|A_\xi|$ refers to the measure of the set $A_\xi$. When $\xi\in E_1$, by straightforward check, we infer from (\ref{intd}) that 
\[  
	\Big[ 2N + \b_1 + \frac14 \g, \, 2N + \b_1 + \frac34 \g  \Big] \subset A_\xi \subset [ 2N + \b_1, \, 2N + \b_1 + \g ].
\]
So $\g / 2 \leq |A_\xi | \leq \g$ and 
\[  
	|\F_x \psi_2(\xi,t)| \geq \g^{-1} N^{-2s} N t \frac{\g}{2} = \frac{t}{2} N^{1-2s}.
\]
Therefore, 
\be\label{psi2norm}
	\begin{split}
		\| \psi_2(\cdot, t) \|_{H^s}^2 &\geq \Big\| \la \xi \ra^s \F_x \psi_2(\xi,t) \m{I}_{E_1}(\xi) \Big\|_{L^2_\xi}^2 \\
		& \geq \int_{E_1} \la \xi \ra^{2s} \Big( \frac{t}{2} N^{1-2s} \Big)^2 \,d\xi \geq \frac{t^2}{8} N^{1-2s}.
	\end{split}
\ee

If the solution map is $C^2$, then it follows from (\ref{2nd-derib}), (\ref{psi2norm}) and (\ref{init-n}) that 
\[  
	\frac{t^2}{8} N^{1 - 2s} \leq C,
\]
where $0 < t \leq \min\{ T_1, t_1 \}$ and $C$ is some constant independent of $N$. Hence, by fixing $t = \min\{ T_1, t_1\}$ and sending $N \to \infty$, we conclude that $s\geq \frac12$.

\subsection{Part (b)}
\label{Subsec, pf of Thm-b}

Firstly, we choose $\phi = 0$ in (\ref{m-MB-ip}) such that $\phi_1 = \psi_2 = \phi_3 = 0$ according to (\ref{1st-deri})--(\ref{3rd-deri}). Meanwhile, similar to (\ref{1stF}) and (\ref{2ndF}) in Section \ref{Subsec, pf of Thm-a}, it follows from (\ref{1st-deri}) and (\ref{2nd-deri}) that 
\be\label{1stF-b}
	\F_x \psi_1(\xi, t) = e^{i \phi^{\a,\b}(\xi) t} \wh{\psi}(\xi),
\ee
and 
\be\label{2ndF-b}
	\F_x \phi_2(\xi, t) = i \xi e^{i \xi^3 t} \int_{\m{R}} \wh{\psi}(\xi_1) \wh{\psi}(\xi - \xi_1) \frac{e^{i G_1(\xi_1, \xi - \xi_1, -\xi)t} -1 }{G_1(\xi_1, \xi - \xi_1, -\xi)} \,d\xi_1,
\ee
where 
\be\label{G1}
	G_1(\eta_1, \eta_2, \eta_3) \triangleq \phi^{\a,\b}(\eta_1) + \phi^{\a,\b}(\eta_2) + \eta_3^3, \quad \forall\, \sum_{i=1}^{3} \eta_i = 0.
\ee

Next, we will focus on the computation of $\F_x \psi_3(\xi, t)$ when $0 < t \ll 1$. Since $\phi_1 = \psi_2 = 0$, we infer from (\ref{3rd-deri}) that 
\[  
	\psi_3(t) = -3 \int_{0}^{t} S_{\a,\b}(t-\tau) \p_x(\phi_2 \psi_1)(\tau) \,d\tau.
\]
Based on (\ref{semiop}), we find 
\[  
	\F_x \psi_3(\xi,t) = -3 \int_{0}^{t} e^{i \phi^{\a,\b}(\xi)(t - \tau)} \xi \int_{\m{R}} \F_x \psi_1 (\xi_1, \tau) \F_x \phi_2(\xi - \xi_1, \tau) \,d\xi_1 \,d\tau.
\]
Plugging (\ref{1stF-b}) and (\ref{2ndF-b}) into the above formula yields 
\[  
	\begin{split}
		\F_{x} \psi_3(\xi, t) = & -3 \int_0^t e^{i \phi^{\a,\b}(\xi) (t - \tau)} \xi \int_{\m{R}} e^{i \phi^{\a,\b}(\xi_1) \tau} \wh{\psi}(\xi_1) \bigg[ i (\xi - \xi_1) e^{i (\xi - \xi_1)^3 \tau} \\
		& \int_{\m{R}} \wh{\psi}(\xi_2) \wh{\psi}(\xi - \xi_1 -\xi_2) \frac{e^{i G_1(\xi_2, \xi - \xi_1 - \xi_2, \xi_1 - \xi)\tau} - 1}{G_1(\xi_2, \xi - \xi_1 - \xi_2, \xi_1 - \xi)} \,d\xi_2 \bigg] \,d\xi_1 \,d\tau.
	\end{split}
\]
Rearranging terms leads to
\be\label{3rdF}
	\begin{split}
		\F_x \psi_3(\xi, t) = & -3 i \xi e^{ i \phi^{\a,\b}(\xi) t } \iint_{\m{R}^2} \frac{(\xi - \xi_1) \wh{\psi}(\xi_1) \wh{\psi}(\xi_2) \wh{\psi}(\xi - \xi_1 -\xi_2) }{G_1(\xi_2, \xi - \xi_1 - \xi_2, \xi_1 - \xi)} \\
		& \int_{0}^{t} e^{i [ \phi^{\a,\b}(\xi_1) + (\xi - \xi_1)^3 - \phi^{\a,\b}(\xi) ]\tau} \Big( e^{i G_1(\xi_2, \xi - \xi_1 - \xi_2, \xi_1 - \xi) \tau } - 1 \Big) \,d\tau \,d\xi_2 \,d\xi_1.
	\end{split}
\ee
Noticing 
\[  
	\phi^{\a,\b}(\xi_1) + (\xi - \xi_1)^3 - \phi^{\a,\b}(\xi) = G_1(\xi_1, -\xi, \xi - \xi_1)
\]
and 
\[
	\begin{split}
		& G_1(\xi_1, -\xi, \xi - \xi_1) + G_1(\xi_2, \xi - \xi_1 - \xi_2, \xi_1 - \xi) \\
		= \quad & \phi^{\a, \b}(\xi_1) + \phi^{\a, \b}(-\xi) + \phi^{\a, \b}(\xi_2) + \phi^{\a, \b}(\xi - \xi_1 - \xi_2)
		\\
		= \quad & G_2(\xi_1, \xi_2, \xi - \xi_1 -\xi_2, -\xi),
	\end{split}
\]
where 
\be\label{G2}
	G_2(\eta_1, \eta_2, \eta_3, \eta_4) \triangleq \phi^{\a, \b}(\eta_1) + \phi^{\a, \b}(\eta_2) + \phi^{\a, \b}(\eta_3) + \phi^{\a, \b}(\eta_4), \quad \forall\, \sum_{i=1}^{3} \eta_i = 0,
\ee
it then follows from (\ref{3rdF}) that 
\be\label{3rdFdcom}
	\F_x \psi_3(\xi, t) = -3 i \big[ I_1(\xi,t) - I_2(\xi,t)  \big],
\ee
where
\be\label{I1}
	\begin{split}
		I_1(\xi,t) = \, & \xi e^{ i \phi^{\a,\b}(\xi) t } \iint_{\m{R}^2} \frac{(\xi - \xi_1) \wh{\psi}(\xi_1) \wh{\psi}(\xi_2) \wh{\psi}(\xi - \xi_1 -\xi_2) }{G_1(\xi_2, \xi - \xi_1 - \xi_2, \xi_1 - \xi)} \\
		& \quad \frac{e^{i G_2(\xi_1, \xi_2, \xi - \xi_1 -\xi_2, -\xi) t} - 1}{G_2(\xi_1, \xi_2, \xi - \xi_1 -\xi_2, -\xi)} \,d\xi_2 \,d\xi_1
	\end{split}
\ee
and 
\be\label{I2}
	\begin{split}
		I_2(\xi,t) = \, & \xi e^{ i \phi^{\a,\b}(\xi) t } \iint_{\m{R}^2} \frac{(\xi - \xi_1) \wh{\psi}(\xi_1) \wh{\psi}(\xi_2) \wh{\psi}(\xi - \xi_1 -\xi_2) }{G_1(\xi_2, \xi - \xi_1 - \xi_2, \xi_1 - \xi)} \\
		& \quad \frac{e^{i G_1(\xi_1, -\xi, \xi - \xi_1) t} - 1 }{G_1(\xi_1, -\xi, \xi - \xi_1)} \,d\xi_2 \,d\xi_1.
	\end{split}
\ee
  
Now we are ready to choose a suitable $\psi$ such that $I_1$ is the dominating term over $I_2$. Firstly, based on (\ref{G1}) and (\ref{G2}) with $\a = 4$, we have 
\be\label{G12}
	\left\{\begin{aligned}
		& G_1(\eta_1, \eta_2, \eta_3) = -3 \eta_3 \big[ (\eta_1 - \eta_2)^2 - \b / 3 \big],  \quad \forall\, \sum_{i=1}^{3} \eta_i = 0,  \\
		& G_2(\eta_1, \eta_2, \eta_3, \eta_4) = -12 (\eta_1 + \eta_2)(\eta_1 + \eta_3)(\eta_2 + \eta_3), \quad \forall\, \sum_{i=1}^{4} \eta_i = 0.
	\end{aligned}\right.
\ee
Since $\b < 0$, there exists $\b_2 > 0$ such that $\b_2^2 = -\b / 3$. Consequently, 
\be\label{Gform}
	\left\{\begin{aligned}
		& G_1(\xi_2, \xi - \xi_1 -\xi_2, \xi_1 - \xi) = -3 (\xi_1 - \xi) \big[ (\xi_1 + 2\xi_2 - \xi)^2 + \b_2^2 \big],  \\
		& G_1(\xi_1, -\xi, \xi - \xi_1) = -3 (\xi - \xi_1) \big[ (\xi_1 + \xi)^2 + \b_2^2 \big], \\
		& G_2(\xi_1, \xi_2, \xi - \xi_1 -\xi_2, -\xi) = -12 (\xi_1 + \xi_2)(\xi - \xi_2)(\xi - \xi_1).
	\end{aligned}\right.
\ee

Let $N$ be any positive integer such that $N \gg 1 + |\b|$. Then (\ref{Gform}) inspires us to choose $\psi$ such that 
\be\label{r-psi}
	\wh{\psi}(\xi) = \g_2^{-1/2} N^{-s} \, \m{I}_{B}(\xi),
\ee
where $\g_2 = N^{-\frac12}$ and $B = B_1 \cup B_2$ with 
\be\label{B12}
	B_1 = [N, N + 4\g_2], \quad B_2 = [ -N - 9\g_2, -N - 5\g_2 ].
\ee
Then it is readily seen that 
\be\label{init-n2}
	\| \psi \|_{H^{s}(\m{R})} \sim 1.
\ee

Next, we will estimate the size of $\F_x \psi_3(\xi, t)$ when $\xi \in E_2$ and $0 < t \ll 1$, where 
\be\label{effrange2}
	E_2 \triangleq [ -N - 12 \g_2, \, -N - 11 \g_2 ].
\ee
Since the support $B$ of $\psi$ consists of two parts $B_1$ and $B_2$, we can decompose $I_1$ and $I_2$ in (\ref{I1}) and (\ref{I2}) into four parts respectively. More specifically, we write 
\be\label{I1decom}
	I_1 = I_{11} + I_{12} + I_{13} + I_{14},
\ee
where $I_{11}$, $I_{12}$, $I_{13}$ and $I_{14}$ denote the contribution to $I_1$ on the integral domains $B_1 \times B_1$, $B_1 \times B_2$, $B_2 \times B_1$ and $B_2 \times B_2$ respectively. Analogously, we write 
\be\label{I2decom}
	I_2 = I_{21} + I_{22} + I_{23} + I_{24}.
\ee

\begin{itemize}
	\item Case 1: $\xi_1, \xi_2 \in B_1$, where $B_1$ is as defined in (\ref{B12}).
	
	In this case, since we only consider $\xi \in E_2$ as defined in (\ref{effrange2}), then it is straightforward  to check that $ \xi - \xi_1 - \xi_2 \leq -3N - 11 \g_2$, which implies that $\wh{\psi}(\xi - \xi_1 - \xi_2) = 0$. Hence, 
	\be\label{est-d1}
		| I_{11}(\xi,t) | + | I_{21}(\xi,t) | = 0, \quad \forall \, \xi\in E_2, \, t>0.
	\ee
	
		
	\item Case 2: $\xi_1 \in B_1$ and $\xi_2 \in B_2$, where $B_1$ and $B_2$ are as defined in (\ref{B12}).
	
	Again, we restrict the consideration of $\xi$ in $E_2$ in this case, it then follows from (\ref{Gform}) that
	\[
	\begin{split}
		& \big| G_1(\xi_2, \xi - \xi_1 - \xi_2, \xi_1 - \xi) \big| \sim N,\\
		& \big| G_1(\xi_1, -\xi, \xi - \xi_1) \big| \sim N, \\
		& \big| G_2(\xi_1, \xi_2, \xi - \xi_1 - \xi_2, -\xi) \big| \sim \g_2^2 N \sim 1.
	\end{split}	
	\]
	As a result, we infer from (\ref{I2}) that 
	\[  
		| I_{22}(\xi,t) | \leq C N \iint _{B_1 \times B_2} \frac{N \g_2^{-3/2} N^{-3s} }{N} \frac{2}{N} \,d\xi_2 \,d\xi_1 = C N^{-\frac14 - 3s}.
	\]
	Next, instead of providing an upper bound, we will derive a large lower bound for $|I_{12}(\xi, t) |$ which eventually becomes the dominating term. 
	
	Firstly, since $\text{Im}(e^{i \th} - 1 ) = \sin(\th)$ for any $\th \in \m{R}$, it then follows from (\ref{I1}) that 
	\be\label{I12}
		\begin{split}
			| I_{12}(\xi,t) | \geq |\xi| \bigg| \iint_{B_1 \times B_2} 
			& \frac{(\xi - \xi_1) \wh{\psi}(\xi_1) \wh{\psi}(\xi_2) \wh{\psi}(\xi - \xi_1 -\xi_2) }{G_1(\xi_2, \xi - \xi_1 - \xi_2, \xi_1 - \xi)} \\
			& \frac{\sin\big( G_2(\xi_1, \xi_2, \xi - \xi_1 - \xi_2, -\xi) t \big)}{G_2(\xi_1, \xi_2, \xi - \xi_1 - \xi_2, -\xi)} \,d\xi_2 \,d\xi_1 \bigg|.
		\end{split}
	\ee
	Recalling the property that $| G_2(\xi_1, \xi_2, \xi - \xi_1 - \xi_2, -\xi) | \sim 1 $, so there exists some absolute constant $t_2$ (independent of $N$) such that 
	\[  
		\frac{\sin\big( G_2(\xi_1, \xi_2, \xi - \xi_1 - \xi_2, -\xi) t \big)}{G_2(\xi_1, \xi_2, \xi - \xi_1 - \xi_2, -\xi)} \geq \frac{t}{2}, \quad \forall \, 0 < t \leq t_2.
	\]
	Moreover, when $\xi_1 \in B_1$ and $\xi \in E_2$, one can also check that \[
		\xi - \xi_1 < 0 \quad \text{and} \quad G_1(\xi_2, \xi - \xi_1 - \xi_2, \xi_1 - \xi) < 0.
	\] 
	This guarantees that the integrand  in (\ref{I12}) is always nonnegative, which allows one to move the absolute value sign into the integral. That is,
	\[  
		\begin{split}
			\text{RHS of (\ref{I12})} = |\xi| \iint_{B_1 \times B_2} 
			& \bigg| \frac{(\xi - \xi_1) \wh{\psi}(\xi_1) \wh{\psi}(\xi_2) \wh{\psi}(\xi - \xi_1 -\xi_2) }{G_1(\xi_2, \xi - \xi_1 - \xi_2, \xi_1 - \xi)} \\
			& \frac{\sin\big( G_2(\xi_1, \xi_2, \xi - \xi_1 - \xi_2, -\xi) t \big)}{G_2(\xi_1, \xi_2, \xi - \xi_1 - \xi_2, -\xi)} \bigg| \,d\xi_2 \,d\xi_1.
		\end{split}
	\]
	Therefore,
	\be\label{I12b1}
		\begin{split}
			| I_{12}(\xi, t) | &\gs N \iint_{B_1 \times B_2}  \frac{N \wh{\psi}(\xi_1) \wh{\psi}(\xi_2) \wh{\psi}(\xi - \xi_1 -\xi_2) }{N} \frac{t}{2} \,d\xi_2 \,d\xi_1 \\
			& \gs N t  \iint_{B_1 \times B_2}  \wh{\psi}(\xi_1) \wh{\psi}(\xi_2) \wh{\psi}(\xi - \xi_1 -\xi_2) \,d\xi_2 \,d\xi_1.
		\end{split}
	\ee
	Again, since $\xi \in E_2$, one can directly check that 
	$\wh{\psi}(\xi_1) = \wh{\psi}(\xi_2) = \wh{\psi}(\xi - \xi_1 - \xi_2) = 1$ 
	as long as
	\[  
		\xi_1 \in \Big[ N + \frac32 \g_2, \, N + \frac52 \g_2 \Big] \quad \text{and} \quad 
		\xi_2 \in \Big[ -N - \frac{15}{2} \g_2, \, -N - \frac{13}{2} \g_2 \Big].
	\]
	Thus, we infer from (\ref{I12b1}) that 
	\be\label{I12b2}
		| I_{12}(\xi, t) | \gs N t \int_{N + \frac32 \g_2}^{N + \frac52 \g_2} \int_{-N - \frac{15}{2} \g_2}^{-N - \frac{13}{2} \g_2} \g_2^{-\frac32} N^{-3s} \,d\xi_2 \,d\xi_1 \gs N^{\frac34 - 3s} t.	
	\ee
	As a consequence of (\ref{I12b1}) and (\ref{I12b2}), when $N$ is large,
	\be\label{est-d2}
		| I_{12}(\xi,t) | \geq C N^{\frac34 - 3s} t + |I_{22}(\xi, t)|, \quad \forall\, \xi\in E_2, \, 0 < t \leq t_2.
	\ee
		
	\item Case 3: $\xi_1 \in B_2$ and $\xi_2 \in B_1$. 
	
	In this case, it again follows from the constraint $\xi\in E_2$ and (\ref{Gform}) that 
	\[
	\begin{split}
		& \big| G_1(\xi_2, \xi - \xi_1 - \xi_2, \xi_1 - \xi) \big| \sim \g_2 N^2 \sim N^{\frac32},\\
		& \big| G_1(\xi_1, -\xi, \xi - \xi_1) \big| \sim \g_2 N^2 \sim N^{\frac32}, \\
		& \big| G_2(\xi_1, \xi_2, \xi - \xi_1 - \xi_2, -\xi) \big| \sim \g_2^2 N \sim 1.
	\end{split}	
	\]
	Then we use the similar estimate method as that in Case 1 to find
	\[  
		| I_{13}(\xi,t) | \leq C N^{-\frac54 - 3s} \quad \text{and} \quad |I_{23}(\xi,t)| \leq C N^{-\frac{11}{4} - 3s}.
	\]
	Consequently, when $N$ is large, 
	\be\label{est-d3}
		| I_{13}(\xi,t) | + | I_{23}(\xi,t) | \leq C N^{-\frac54 - 3s}, \quad \forall\, \xi\in E_2, \, t>0.
	\ee
	
	\item Case 4: $\xi_1, \xi_2 \in B_2$.
	
	This case is similar to Case 3, so we omit the details and only carry out the final result. That is, when $N$ is large,
	\be\label{est-d4}
		| I_{14}(\xi,t) | + | I_{24}(\xi,t) | \leq C N^{-\frac54 - 3s}, \quad \forall\, \xi\in E_2, \, t>0.
	\ee	
\end{itemize}

According to (\ref{est-d1}), (\ref{est-d2}), (\ref{est-d3}) and (\ref{est-d4}), it then follows from (\ref{3rdFdcom}) that 
\[  
	|\F_x \psi_3 (\xi,t)| = 3 \big| I_1(\xi,t) - I_2(\xi,t) \big| \geq C N^{\frac34 - 3s} t,
\]
provided that $\xi \in E_2$, $0 < t \leq t_2$ and $N$ is sufficiently large.
Hence, for any $0 < t \leq t_2$,
\be\label{psi3n}
	\begin{split}
		\| \psi_3(\cdot, t) \|_{H^s(\m{R})}^2 & \geq \Big\| \la \xi \ra^{s} \F_x \psi_3(\xi, t) \m{I}_{E_2}(\xi) \Big\|_{L^2(\m{R})}^2 \\
		& = \int_{E_2} \la \xi \ra^{2s} \big| \F_x \psi_3(\xi,t) \big|^2 \,d\xi \\
		& \gs \int_{E_2} N^{2s} N^{\frac32 - 6s} t^2 \,d\xi \sim N^{1 - 4s} t^2.
	\end{split}
\ee
If the solution map is $C^3$, then we infer from (\ref{3rd-derib}), (\ref{init-n2}) and (\ref{psi3n}) that 
\[  
	N^{1-4s} t^2 \leq C,
\]
where $0 < t \leq \min\{T_2, t_2\}$ and $C$ is some constant independent of $N$. Finally, by fixing $t = \min\{T_2, t_2\}$ and sending $N \to \infty$, we conclude that $s \geq \frac14$.

\section{Proof of Theorem \ref{Thm, auxi_R}}
\label{Sec, pf-auxi}

\subsection{Part (a)}
\label{Subsec, pf of Thm-auxi-a}

The general framework of this part is similar to that in Section \ref{Subsec, pf of Thm-a}. Recalling (\ref{2ndF}) in which we have derived that
\be\label{r-Fpsi2}
	\F_{x} \psi_2(\xi, t) = 2 i \xi e^{i \phi^{\a, \b}(\xi) t} \int_{\m{R}} \wh{\phi}(\xi_1) \wh{\psi}(\xi - \xi_1) \frac{e^{i G_0(\xi_1, \xi - \xi_1, -\xi) t} - 1}{G_0(\xi_1, \xi - \xi_1, -\xi)} \,d\xi_1,
\ee
where $G_0$ is as defined in (\ref{G0}). By replacing $\eta_2$ with $- (\eta_1 + \eta_3)$ in (\ref{G0}), we have 
\be\label{r-G}
	G_0(\eta_1, \eta_2, \eta_3) = -3\a \eta_1^3 f\Big(\frac{\eta_3}{\eta_1}\Big) + \b \eta_1, \quad \forall\, \sum_{i=1}^{3} \eta_i = 0.
\ee
where 
\be\label{r-qf}
	f(x) \triangleq x^2 + x + \frac{\a-1}{3\a}.
\ee
When $\a \in (0,4)\setminus\{1\}$, $f$ has two non-zero roots, $C_{1\a}$ and $C_{2\a}$:
\be\label{roots}
	C_{1\a} = - \frac12 - \frac12 \sqrt{ \frac{4-\a}{3\a} }, \qquad C_{2\a} = - \frac12 + \frac12 \sqrt{ \frac{4-\a}{3\a} }.
\ee
So $f$ and $G_0$ can be rewritten as $f(x) = ( x - C_{1\a} ) ( x - C_{2\a} ) $ and 
\be\label{r-G-comp}
	G_0(\eta_1, \eta_2, \eta_3) = \eta_1 \Big[ -3\a (\eta_3 - C_{1\a}\eta_1) (\eta_3 - C_{2\a} \eta_1 ) + \b \Big].
\ee
Let $N$ be any large positive integer and we choose 
\[
	\left\{\begin{array}{l}
		\wh{\phi}(\xi) = \g^{-\frac12} N^{-s} \m{I}_{0,\g}(\xi - N), \\
		\wh{\psi}(\xi) = \g^{-\frac12} N^{-s} \m{I}_{[-\g, \g]} (\xi + (1 + C_{1\a})N - \a N^{-1} ),
	\end{array}\right.
\]
where $\g = N^{-2}$ and $\lam$ is a real constant which will be determined later. It is easily seen that $\| \phi \|_{ H^s( \m{R}^n ) } \sim \| \psi \|_{ H^s( \m{R}^n ) } \sim 1$. For fixed $\xi \in \m{R}^n$, we denote 
\[  
	A_{\xi} = \big\{ \xi_1\in\m{R}: \quad 0\leq \xi_1 - N \leq \g, \quad -\g \leq \xi - \xi_1 + (1+C_{1\a}) N - \lam N^{-1} \leq \g \big\}.
\]
Then we infer from (\ref{r-Fpsi2}) that 
\be\label{r-Fpsi2-m}
	| \F_{x} \psi_2(\xi, t) | = 2 \g^{-1} N^{-2s} |\xi| \bigg| \int_{ A_{\xi} } \frac{e^{i G_0(\xi_1, \xi - \xi_1, -\xi) t} - 1}{G_0(\xi_1, \xi - \xi_1, -\xi)} \,d\xi_1 \bigg|.
\ee
When restricting $\xi$ on the interval $\big[ -C_{1\a}N + \lam N^{-1}, \, -C_{1\a}N + \lam N^{-1} + \g \big]$, we can easily verify that 
\[
	A_{\xi} = [N, N+\g].
\] 
Meanwhile, it follows from (\ref{r-G-comp}) that 
\[  
	G_0(\xi_1, \xi - \xi_1, -\xi) = \xi_1 \big[ -3\a (\xi + C_{1\a} \xi_1) (\xi + C_{2\a}\xi_1) + \b \big].
\]
When $N \leq \xi_1 \leq N + \g$, we find 
\[  
	\xi + C_{1\a} \xi_1 = \lam N^{-1} + O(N^{-2}), \qquad \xi + C_{2\a} \xi_1 = (C_{2\a} - C_{1\a}) N + O(N^{-1}).
\]
Therefore, 
\[  
	G_0(\xi_1, \xi - \xi_1, -\xi) = \xi_1 \big[ -3\a \lam (C_{2\a} - C_{1\a}) + \b + O(N^{-1}) \big].
\]
Now we choose $\lam$ such that $-3\a \lam (C_{2\a} - C_{1\a}) + \b = 0$, that is, 
\[  
	\lam = \frac{\b}{ 3\a ( C_{2\a} - C_{1\a} ) } = \frac{\b}{\sqrt{3\a (4-\a)}}.
\]
Hence, 
\[  
	| G_0(\xi_1, \xi - \xi_1, -\xi) | \sim O(N^{-1} |\xi_1|) \sim O(1).
\]
The rest argument is similar to that in Section \ref{Subsec, pf of Thm-a} (from (\ref{res-size1}) to the end) and we finally conclude that $s\geq 0$ if the solution map is $C^2$.

\subsection{Part (b)}h
\label{Subsec, pf of Thm-auxi-b}
The general framework of this part is similar to that in Section \ref{Subsec, pf of Thm-b}, so we will only highlight the key step without carrying out all details. We first choose $\phi = 0$ in (\ref{m-MB-ip}) and recall $\F_x \psi_3$, $I_1$ and $I_2$ in (\ref{3rdFdcom})-(\ref{I2}), the crucial parts are functions $G_1$ and $G_2$ which were introduced earlier in (\ref{G1}) and (\ref{G2}). We are going to analyze these two functions for general $\a$ and $\b$.

Firstly, by direct computation, 
\be\label{gG2}
	G_2(\eta_1, \eta_2, \eta_3, \eta_4) = -3\a (\eta_1 + \eta_2)(\eta_1 + \eta_3)(\eta_2 + \eta_3),
\qquad \forall \sum_{i=1}^{4} \eta_i = 0.
\ee
This means that $G_2$ is independent of $\b$ and the effect of $\a$ is only a constant multiple. 
Secondly, according to the definitions for $G_0$ and $G_1$, we know $G_1(\eta_1, \eta_2, \eta_3) = G_0(\eta_3, \eta_2, \eta_1)$. So it follows from (\ref{G0}) that 
\be\label{gG1}
G_1(\eta_1, \eta_2, \eta_3) = -3\a \eta_3^3 f\Big(\frac{\eta_1}{\eta_3}\Big) + \b \eta_3, \quad \forall \sum_{i=1}^{3} \eta_i = 0,
\ee
where $f(x) = x^2 + x + \frac{\a-1}{3\a}$. 

Now we choose the same function $\psi$ as defined in (\ref{r-psi}) and consider the range of $\xi$ to be $E_2$ as defined in (\ref{effrange2}). In addition, we decompose $I_1$ and $I_2$ in the same way as in (\ref{I1}) and (\ref{I2}). The main reason of keeping this setup is to ensure that the integral vanishes on $B_1 \times B_1$ and the function $G_2(\xi_1, \xi_2, \xi - \xi_1 - \xi_2, -\xi)$ is of constant size on the rest domain 
\[
	\Omega \triangleq (B_1\times B_2) \cup (B_2\times B_1) \cup (B_2\times B_2).
\] 
As we can see from Section \ref{Subsec, pf of Thm-b}, it is this property that makes it possible to derive a lower bound for $|I_1|$ which dominates $|I_2|$. 

In order to obtain a small threshold for $s$, the above mentioned lower bound for $I_1$ needs to be as small as possible, so another key step is to make the term $|\xi_1 - \xi| / |G_1(\xi_2, \xi - \xi_1 - \xi_2, \xi_1 - \xi)|$ in $I_1$ small. We discuss this issue in three cases based on the value of $\a$.

\begin{itemize}
	\item {\bf Case $\a > 4$}. In this case, $f(x)$ can be rewritten as  
	\be\label{fma}
	f(x) = \Big( x + \frac12 \Big)^2 + m_\a,
	\ee
	where 
	\be\label{ma}
	m_\a = \frac{\a-1}{3\a} - \frac14 = \frac{\a - 4}{12\a} > 0.
	\ee
	So $f$ has a positive lower bound which can be used to ensure $G_1$ to be large. More precisely, we infer from (\ref{gG1}) that
	\be\label{G1p}
	G_1(\eta_1, \eta_2, \eta_3) = -3\a \eta_3 \Big[ \Big(\eta_1 + \frac12 \eta_3\Big)^2 + m_\a \eta_3^2 \Big] + \b \eta_3, \quad \forall\, \sum_{i=1}^{3} \eta_i = 0.
	\ee
	Keeping the constraint $\sum\limits_{i=1}^{3} \eta_i = 0$ in mind, then it is readily seen that whenever $\max\limits_{1\leq i \leq 3} |\eta_i| \gg 1$, we have 
	\be\label{G1s}
		| G_1(\eta_1, \eta_2, \eta_3) | \sim |\eta_3| \sum_{i=1}^{3} \eta_i^2.
	\ee
	This means that the effect of the linear term $\b \eta_3$ is negligible. On the other hand, noticing that $|\xi| \sim N$ on the support $B$ of the function $\wh{\psi} = \wh{\psi}(\xi)$,
	so we have 
	\be\label{large-xi}
		|\xi_1| \sim |\xi_2| \sim |\xi - \xi_1 - \xi_2| \sim N \gg 1
	\ee
	on the support of the term $\wh{\psi}(\xi_1) \wh{\psi}(\xi_2) \wh{\psi}(\xi - \xi_1 - \xi_2)$ in $I_1$. Consequently, 	 
	\be\label{small-ratio-G1}
		\frac{|\xi_1 - \xi|}{ | G_1(\xi_2, \xi - \xi_1 - \xi_2, \xi_1 - \xi) | } \sim \frac{|\xi_1 - \xi|}{N^2 |\xi_1 - \xi|} \sim \frac{1}{N^2}.
	\ee
	Thanks to the above estimate which is much smaller than that in Case 2 in Section \ref{Subsec, pf of Thm-b}, the threshold for $s$ in this case can be lowered to be $-\frac34$.
	
	\item {\bf Case $\a < 0$}. This case is almost identical with the case $\a > 4$ since the constant $m_\a$, as defined in (\ref{ma}), is also a positive constant when $\a < 0$.
	
	\item {\bf Case $\a = 1$}. In this case, $m_\a = -\frac14$ and $f(x) = x(x+1)$. This $f$ is not always positive, so the above strategy does not apply straightforwardly. Meanwhile, the function $G_1$ becomes 
	\be\label{G1-a1}
		G_{1}(\eta_1, \eta_2, \eta_3) = 3\eta_1 \eta_2 \eta_3 + \b\eta_3, \quad \forall\, \sum_{i=1}^{3} \eta_i = 0.
	\ee
	In particular, 
	\[
		G_1(\xi_2, \xi-\xi_1-\xi_2, \xi_1-\xi) = (\xi_1-\xi) \big[3\xi_2(\xi-\xi_1-\xi_2) + \b \big].
	\]
	Due to the observation (\ref{large-xi}) again, we know 
	\[
		\frac{|\xi_1 - \xi|}{ | G_1(\xi_2, \xi - \xi_1 - \xi_2, \xi_1 - \xi) | } \sim \frac{1}{N^2},
	\]
	which matches (\ref{small-ratio-G1}). Therefore, by analogous argument, one can also concludes the threshold for $s$ is $-\frac34$.

\end{itemize}

\appendix

\section{An ill-posedness argument}
As mentioned in the introduction, when $\a = 4$ and $\b = 0$, (\ref{m-MB}) is known to be analytically LWP in $\mathscr{H}^{s}(\m{R})$ for any $s \geq \frac34$. Next, we will show that $\frac34$ is the smallest value for $s$ in order for (\ref{m-MB}) to be at least $C^2$ LWP.

\begin{proposition}\label{Prop, old_ip}
	Let $\a = 4$ and $\b = 0$. Then (\ref{m-MB}) fails to be $C^2$ LWP in $\mathscr{H}^{s}(\m{R})$ for any $s < \frac34$.
\end{proposition}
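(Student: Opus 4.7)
The plan is to follow exactly the $C^2$ ill-posedness scheme laid out in Section \ref{Subsec, pf of Thm-a}, but now with $\b = 0$, and to choose the frequency window so that the resonance function $G_0$, which now has a double root rather than two distinct roots, stays of size one on a region of the largest possible volume. Set $(u_0, v_0) = (\d \phi, \d \psi)$ in (\ref{m-MB-ip}) and differentiate the Duhamel formula (\ref{Duh-soln}) twice in $\d$ at $\d = 0$. Exactly as in (\ref{2ndF}) (now applied with $\a = 4$, $\b = 0$), this yields
\[
    \F_{x} \psi_2(\xi, t) = 2 i \xi e^{i \xi^3 t} \int_{\m{R}} \wh{\phi}(\xi_1) \wh{\psi}(\xi - \xi_1) \frac{e^{i G_0(\xi_1, \xi - \xi_1, -\xi) t} - 1}{G_0(\xi_1, \xi - \xi_1, -\xi)} \,d\xi_1,
\]
with
\[
    G_0(\xi_1, \xi - \xi_1, -\xi) = -3\xi_1 \big[(2\xi - \xi_1)^2 - \b/3\big] = -3\xi_1 (2\xi - \xi_1)^2
\]
since $\b = 0$.

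To force $|G_0| \sim 1$ on as large a set as possible, I would take $\xi_1 \sim N$ and $|2\xi - \xi_1| \sim N^{-1/2}$; this is the resonance that is responsible for the threshold $3/4$ in (\ref{old_ind}) and corresponds to the heuristic in the paragraph around (\ref{G0intro}). Accordingly, with $N \gg 1$ and $\g = N^{-1/2}$, I would choose
\[
    \wh{\phi}(\xi) = \g^{-1/2} N^{-s} \m{I}_{[0,\g]}(\xi - 2N), \qquad \wh{\psi}(\xi) = \g^{-1/2} N^{-s} \m{I}_{[\g,2\g]}(\xi + N),
\]
so that $\|\phi\|_{H^s(\m{R})} \sim \|\psi\|_{H^s(\m{R})} \sim 1$. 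Then restrict $\xi$ to an interval $E \subset [N + \tfrac{7}{4}\g, N + \tfrac{9}{4}\g]$ of length $\sim \g$, chosen so that for every such $\xi$ the set $A_\xi$ of admissible $\xi_1$ has measure $\sim \g$ and, for $\xi_1 \in A_\xi$, $\xi_1 \sim N$ and $|2\xi - \xi_1| \sim \g = N^{-1/2}$; this forces
\[
    \big|G_0(\xi_1, \xi - \xi_1, -\xi)\big| \;\sim\; N \cdot \g^2 \;\sim\; 1.
\]

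With $|G_0| \sim 1$ on $A_\xi$, there is an absolute $t_0 > 0$ (independent of $N$) such that for $0 < t \le t_0$,
\[
    \operatorname{Im}\!\bigg(\frac{e^{iG_0 t} - 1}{G_0}\bigg) \;=\; \frac{\sin(G_0 t)}{G_0} \;\geq\; \frac{t}{2}
\]
on $A_\xi$, and I would verify (exactly as after (\ref{large-im})) that the integrand has constant sign there, so no cancellation occurs. This gives
\[
    |\F_x \psi_2(\xi, t)| \;\gs\; \g^{-1} N^{-2s}\, |\xi|\, t\, |A_\xi| \;\gs\; N^{1-2s}\, t, \qquad \xi \in E,
\]
and hence
\[
    \|\psi_2(\cdot, t)\|_{H^s(\m{R})}^2 \;\geq\; \int_E \la \xi \ra^{2s}\, |\F_x \psi_2(\xi, t)|^2 \,d\xi \;\gs\; N^{2s}\, N^{2-4s}\, t^2 \cdot \g \;\sim\; N^{3/2 - 2s}\, t^2.
\]
Combined with (\ref{2nd-derib}) and $\|(\phi,\psi)\|_{\mathscr{H}^s} \sim 1$, letting $t = \min\{T_1, t_0\}$ be fixed and sending $N \to \infty$ forces $3/2 - 2s \leq 0$, i.e.\ $s \geq 3/4$; contrapositively, $C^2$ LWP must fail whenever $s < 3/4$.

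The main obstacle, as in Section \ref{Subsec, pf of Thm-a}, is not the asymptotic count but the bookkeeping that ensures $A_\xi$ has measure $\sim \g$ and that $G_0$ admits the matching two-sided estimate $|G_0| \sim 1$ throughout $A_\xi \times E$; everything else is routine once the correct scaling $\g = N^{-1/2}$ — dictated by the double root of $(2\xi - \xi_1)^2$ when $\b = 0$ — is in place.
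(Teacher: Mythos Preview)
Your proposal is correct and matches the paper's proof essentially line for line: both set $\b_1 = 0$, take $\g = N^{-1/2}$ (the paper writes $\g_2$) as dictated by the double root in $G_0 = -3\xi_1(2\xi - \xi_1)^2$, use identical test functions $\wh\phi,\wh\psi$, and obtain $\|\psi_2\|_{H^s}^2 \gs N^{3/2 - 2s} t^2$. One cosmetic slip: the phase factor should be $e^{i\phi^{4,0}(\xi)t} = e^{4i\xi^3 t}$ rather than $e^{i\xi^3 t}$, but this is immaterial once absolute values are taken.
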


\begin{proof}
This proof is very similar to that in Section \ref{Subsec, pf of Thm-a} which studied the case when $\a = 4$ and $\b > 0$. In the current case, the argument is actually much simpler since $\b = 0$. Next, we will only highlight the key modification based on the argument in Section \ref{Subsec, pf of Thm-a}. 

Firstly, since $\b = 0$, we will choose $\b_1 = 0$. Consequently, the resonance function in (\ref{res-f1}) becomes 	
\be\label{res-f0}
	\xi_1^3 + \phi^{\a, 0}(\xi - \xi_1) - \phi^{\a, 0}(\xi) = -3 \xi_1 (2\xi - \xi_1)^2.
\ee
Secondly, the key ingredient in the proof is to restrict the resonance function to be comparable to constants. When $\b_1 > 0$ in (\ref{res-size1}), the parameter $\g$ needs to be chosen as $N^{-1}$ in order to achieve this goal. Now since $\b_1 = 0$, if we repeat the proof in Section \ref{Subsec, pf of Thm-a}, then 
\[  
	2\xi - \xi_1 \sim \g \quad\text{and} \quad \text{RHS of (\ref{res-f0})} \sim N \g^2.
\]
Therefore, we only need to choose $\g$ to be $N^{-\frac12}$ to ensure the constant size of the resonance function. 

As a summary, we let $\b_1 = 0$ and choose $\phi$ and $\psi$ such that 
\[
\left\{\begin{aligned} 
	& \wh{\phi}(\xi) = \g_2^{-1/2} N^{-s} \, \m{I}_{[0,\g_2]} (\xi - 2N), \\
	& \wh{\psi}(\xi) = \g_2^{-1/2} N^{-s} \, \m{I}_{[\g_2, 2\g_2]} (\xi + N),
\end{aligned}\right.
\]
where $\g_2 = N^{-1/2}$. Then by following the proof in Section \ref{Subsec, pf of Thm-a}, we conclude that  $s$ is at least $\frac34$ if the solution map is required to be $C^2$.
\end{proof}

\section*{Acknowledgments}



\begin{thebibliography}{10}
	\bibitem{AC08}
	B.~Alvarez and X.~Carvajal.
	\newblock On the local well-posedness for some systems of coupled {K}d{V}
	equations.
	\newblock {\em Nonlinear Anal.}, 69(2):692--715, 2008.
	
	\bibitem{ACW96}
	J.~M. Ash, J.~Cohen, and G.~Wang.
	\newblock On strongly interacting internal solitary waves.
	\newblock {\em J. Fourier Anal. Appl.}, 2(5):507--517, 1996.
	
	\bibitem{BOP97}
	D.~Bekiranov, T.~Ogawa, and G.~Ponce.
	\newblock Weak solvability and well-posedness of a coupled
	{S}chr\"{o}dinger-{K}orteweg-de {V}ries equation for capillary-gravity wave
	interactions.
	\newblock {\em Proc. Amer. Math. Soc.}, 125(10):2907--2919, 1997.
	
	\bibitem{BS75}
J.~L. Bona and R.~Smith.
\newblock The initial-value problem for the {K}orteweg-de {V}ries equation.
\newblock {\em Philos. Trans. Roy. Soc. London Ser. A}, 278(1287):555--601,
  1975.

	\bibitem{BPST92}
	J.~L. Bona, G.~Ponce, J.-C. Saut, and M.~M. Tom.
	\newblock A model system for strong interaction between internal solitary
	waves.
	\newblock {\em Comm. Math. Phys.}, 143(2):287--313, 1992.
	
	\bibitem{Bou93a}
	J.~Bourgain.
	\newblock Fourier transform restriction phenomena for certain lattice subsets
	and applications to nonlinear evolution equations. {I}. {S}chr\"{o}dinger
	equations.
	\newblock {\em Geom. Funct. Anal.}, 3(2):107--156, 1993.
	
	\bibitem{Bou93b}
	J.~Bourgain.
	\newblock Fourier transform restriction phenomena for certain lattice subsets
	and applications to nonlinear evolution equations. {II}. {T}he
	{K}d{V}-equation.
	\newblock {\em Geom. Funct. Anal.}, 3(3):209--262, 1993.
	
	\bibitem{Bou97}
	J.~Bourgain.
	\newblock Periodic {K}orteweg de {V}ries equation with measures as initial
	data.
	\newblock {\em Selecta Math. (N.S.)}, 3(2):115--159, 1997.
	
	\bibitem{CCT03}
	M.~Christ, J.~Colliander, and T.~Tao.
	\newblock Asymptotics, frequency modulation, and low regularity ill-posedness
	for canonical defocusing equations.
	\newblock {\em Amer. J. Math.}, 125(6):1235--1293, 2003.
	
	\bibitem{CKSTT03}
	J.~Colliander, M.~Keel, G.~Staffilani, H.~Takaoka, and T.~Tao.
	\newblock Sharp global well-posedness for {K}d{V} and modified {K}d{V} on
	{$\mathbb R$} and {$\mathbb T$}.
	\newblock {\em J. Amer. Math. Soc.}, 16(3):705--749, 2003.
	
	\bibitem{Fen94}
	X.~Feng.
	\newblock Global well-posedness of the initial value problem for the
	{H}irota-{S}atsuma system.
	\newblock {\em Manuscripta Math.}, 84(3-4):361--378, 1994.
	
	\bibitem{GG84}
	J.~A. Gear and R.~Grimshaw.
	\newblock Weak and strong interactions between internal solitary waves.
	\newblock {\em Stud. Appl. Math.}, 70(3):235--258, 1984.
	
	\bibitem{HS81}
	R.~Hirota and J.~Satsuma.
	\newblock Soliton solutions of a coupled {K}orteweg-de {V}ries equation.
	\newblock {\em Phys. Lett. A}, 85(8-9):407--408, 1981.
	
	\bibitem{KT06}
T.~Kappeler and P.~Topalov.
\newblock Global wellposedness of {K}d{V} in {$H^{-1}(\Bbb T,\Bbb R)$}.
\newblock {\em Duke Math. J.}, 135(2):327--360, 2006.

\bibitem{Kat83}
T.~Kato.
\newblock On the {C}auchy problem for the (generalized) {K}orteweg-de {V}ries
  equation.
\newblock In {\em Studies in applied mathematics}, volume~8 of {\em Adv. Math.
  Suppl. Stud.}, pages 93--128. Academic Press, New York, 1983.
  
  \bibitem{KPV91JAMS}
C.~E. Kenig, G.~Ponce, and L.~Vega.
\newblock Well-posedness of the initial value problem for the {K}orteweg-de
  {V}ries equation.
\newblock {\em J. Amer. Math. Soc.}, 4(2):323--347, 1991.

\bibitem{KPV93Duke}
C.~E. Kenig, G.~Ponce, and L.~Vega.
\newblock The {C}auchy problem for the {K}orteweg-de {V}ries equation in
  {S}obolev spaces of negative indices.
\newblock {\em Duke Math. J.}, 71(1):1--21, 1993.

\bibitem{KPV93CPAM}
C.~E. Kenig, G.~Ponce, and L.~Vega.
\newblock Well-posedness and scattering results for the generalized
  {K}orteweg-de {V}ries equation via the contraction principle.
\newblock {\em Comm. Pure Appl. Math.}, 46(4):527--620, 1993.

	\bibitem{KPV96}
	C.~E. Kenig, G.~Ponce, and L.~Vega.
	\newblock A bilinear estimate with applications to the {K}d{V} equation.
	\newblock {\em J. Amer. Math. Soc.}, 9(2):573--603, 1996.
	
	\bibitem{KV19}
	R.~Killip and M.~Vi\c{s}an.
	\newblock Kd{V} is well-posed in {$H^{-1}$}.
	\newblock {\em Ann. of Math. (2)}, 190(1):249--305, 2019.
	
	\bibitem{LP04}
	F.~Linares and M.~Panthee.
	\newblock On the {C}auchy problem for a coupled system of {K}d{V} equations.
	\newblock {\em Commun. Pure Appl. Anal.}, 3(3):417--431, 2004.
	
	\bibitem{MB03}
	A.~J. Majda and J.~A. Biello.
	\newblock The nonlinear interaction of barotropic and equatorial baroclinic
	{R}ossby waves.
	\newblock {\em J. Atmospheric Sci.}, 60(15):1809--1821, 2003.
	
	\bibitem{Mol11}
	L.~Molinet.
	\newblock A note on ill posedness for the {K}d{V} equation.
	\newblock {\em Differential Integral Equations}, 24(7-8):759--765, 2011.
	
	\bibitem{Mol12}
L.~Molinet.
\newblock Sharp ill-posedness results for the {K}d{V} and m{K}d{V} equations on
  the torus.
\newblock {\em Adv. Math.}, 230(4-6):1895--1930, 2012.


	\bibitem{Oh09}
	T.~Oh.
	\newblock Diophantine conditions in well-posedness theory of coupled
	{K}d{V}-type systems: local theory.
	\newblock {\em Int. Math. Res. Not.}, (18):3516--3556, 2009.
	
	\bibitem{ST00}
	J.-C. Saut and N.~Tzvetkov.
	\newblock On a model system for the oblique interaction of internal gravity
	waves.
	\newblock {\em ESAIM Math. Model. Numer. Anal.}, 34(2):501--523, 2000.
	\newblock Special issue for R. Temam's 60th birthday.
	\bibitem{Sjo67}
A.~Sj\"{o}berg.
\newblock On the {K}orteweg-de {V}ries equation: existence and uniqueness.
\newblock {\em Department of Computer Sciences, Uppsala University, Uppsala,
  Sweden}, 1967.

\bibitem{Sjo70}
A.~Sj\"{o}berg.
\newblock On the {K}orteweg-de {V}ries equation: existence and uniqueness.
\newblock {\em J. Math. Anal. Appl.}, 29:569--579, 1970.

	\bibitem{Tao01}
	T.~Tao.
	\newblock Multilinear weighted convolution of {$L^2$}-functions, and
	applications to nonlinear dispersive equations.
	\newblock {\em Amer. J. Math.}, 123(5):839--908, 2001.
	
	\bibitem{Tem69}
R.~Temam.
\newblock Sur un probl\`eme non lin\'{e}aire.
\newblock {\em J. Math. Pures Appl. (9)}, 48:159--172, 1969.

	\bibitem{YZ22a}
	X.~Yang and B.-Y. Zhang.
	\newblock Local well-posedness of the coupled {K}d{V}-{K}d{V} systems on
	{$\Bbb{R}$}.
	\newblock {\em Evol. Equ. Control Theory}, 11(5):1829--1871, 2022.
	
\end{thebibliography}

{\small

}

\bigskip

\thanks{(X. Yang) School of Mathematics, Southeast University, Nanjing, Jiangsu 211189, China} 

\thanks{Email: xinyang@seu.edu.cn}

\medskip

\thanks{(S. Li) School of Mathematical Sciences, University of Electronic Science and Technology of China, Chengdu, Sichuan 611731, China}

\thanks{Email: lish@uestc.edu.cn}

\medskip



\thanks{(B.-Y. Zhang) Department of Mathematical Sciences, University of Cincinnati, Cincinnati, OH 45221, USA}

\thanks{Email: zhangb@ucmail.uc.edu}

\end{document}